\begin{document}
\title{Exact dimensions of the prime continued fraction Cantor set}

\authortushar\authordavid


\begin{abstract}
We study the exact Hausdorff and packing dimensions of the \textsf{prime Cantor set}, $\Lambda_P$, which comprises the irrationals whose continued fraction entries are prime numbers. 
We prove that the Hausdorff measure of the prime Cantor set cannot be finite and positive with respect to any sufficiently regular dimension function, thus negatively answering a question of Mauldin (2013) for this class of dimension functions. By contrast, under a reasonable number-theoretic conjecture we prove that the packing measure of the conformal measure on the prime Cantor set is in fact positive and finite with respect to the dimension function $\psi(r) = r^\delta \log^{-2\delta}\log(1/r)$, where $\delta$ is the dimension (conformal, Hausdorff, and packing) of the prime Cantor set.
\end{abstract}

\maketitle

Let $\Lambda_P$ denote the \textsf{prime Cantor set}, i.e. the Cantor set of irrationals whose continued fraction entries are prime numbers, and let $\delta = \delta_P$ denote the common value \cite[Theorems 2.7 and 2.11]{MauldinUrbanski4} for the Hausdorff and packing dimensions of $\Lambda_P$. Using a result due to Erd{\doubleacute o}s \cite{Erdos} that guarantees the existence of arbitrarily large two-sided gaps in the sequence of primes, Mauldin and Urba\'nski \cite[Corollary 4.5 and Corollary 5.6]{MauldinUrbanski4} proved that despite there being a conformal measure and a corresponding invariant Borel probability measure for this conformal iterated function system, the $\delta$-dimensional Hausdorff and packing measures were zero and infinity respectively. 
This led naturally to the surprisingly resistant problem\footnote{This problem was the second of a trio concerning the fine geometric properties of continued fraction Cantor sets posed by Mauldin and Urba\'nski  \cite[\67]{MauldinUrbanski4}. The first problem (that asked for the existence of a continued fraction Cantor set with positive and finite Hausdorff and packing measures) was repeated in the 2013 Erd{\doubleacute o}s centennial volume \cite[Problem 7.2]{Mauldin}, and resolved in 2014 by Simmons--Fishman--Urba\'nski \cite[Theorem 7.1]{FSU1}. The third problem (that asked for the existence of a continued fraction Cantor set of Hausdorff dimension $t$ for every $t \in (0,1]$) was resolved in 2006 by Kesseb\"ohmer--Zhu, \cite{KessebohmerZhu}.} (\cite[\67]{MauldinUrbanski4} and \cite[Problem 7.1]{Mauldin}) of determining whether there was an appropriate dimension function with respect to which the Hausdorff and packing measures of  $\Lambda_P$ were positive and finite.

\section{Main theorems}

We start with stating our main results, precise definitions will follow in the next section. Let $\delta = \delta_P$ denote the common value \cite[Theorems 2.7 and 2.11]{MauldinUrbanski4} for the Hausdorff and packing dimensions of $\Lambda_P$. If $\mu$ is a locally finite Borel measure on $\R$, then we let
\begin{align*}
\HH^\psi(\mu) &\df \inf\left\{\HH^\psi(A): \mu(\R\butnot A) = 0\right\},\\
\PP^\psi(\mu) &\df \inf\left\{\PP^\psi(A): \mu(\R\butnot A) = 0\right\}.
\end{align*}

\begin{theorem}
\label{theoremHD}
Let $\mu = \mu_P$ be the conformal measure on $\Lambda_P$, and let $\psi$ be a doubling\Footnote{A function $\psi$ is \textsf{doubling} if for all $C_1 \geq 1$, there exists $C_2 \geq 1$ such that for all $x,y$ with $C_1^{-1} \leq x/y \leq C_1$, we have $C_2^{-1} \leq \psi(x)/\psi(y) \leq C_2$.} dimension function such that $\Psi(r) = r^{-\delta} \psi(r)$ is monotonic.
Then $\HH^\psi(\mu) = 0$ if the series
\begin{equation}
\label{HDseries}
\sum_{k=1}^\infty \frac{y^{\frac{1-2\delta}{1-\delta}}}{(\log y)^{\frac{\delta}{1-\delta}}}\given_{y=\Psi(\lambda^{-k})}
\end{equation}
diverges, and $=\infty$ if it converges, for all (equiv. for any) fixed $\lambda > 1$.
\end{theorem}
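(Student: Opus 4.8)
The plan is to deduce the dichotomy from a pointwise density computation and a Borel--Cantelli argument driven by the large continued fraction entries. The first step is to invoke the Rogers--Taylor density theorem in its gauge form: for a finite Borel measure, writing $\overline{d}{}^\psi_\mu(x) = \limsup_{r\to 0}\mu(B(x,r))/\psi(r)$ for the upper $\psi$-density, one has $\HH^\psi(\mu) = 0$ iff $\overline{d}{}^\psi_\mu = \infty$ $\mu$-a.e., and $\HH^\psi(\mu) = \infty$ iff $\overline{d}{}^\psi_\mu = 0$ $\mu$-a.e. (the doubling of $\psi$ keeps the comparison constants uniform). Thus everything reduces to a zero--infinity law for $\overline{d}{}^\psi_\mu$.

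Next I would analyse the local behaviour of $F(x,r) \df \mu(B(x,r))/r^\delta$ along a $\mu$-typical orbit $x = [a_1,a_2,\dots]$, using bounded distortion of the Gauss IFS together with conformality ($\mu(\Delta_\omega) \asymp |\Delta_\omega|^\delta$). The upper density is produced by the \emph{spikes} of $F$: at the scale $r_n \asymp |\Delta_n|/a_{n+1}$ (the geometric mean of $|\Delta_n|$ and $|\Delta_{n+1}|$) the ball $B(x,r_n)$ engulfs the accumulation of all sub-cylinders $\Delta(a_1,\dots,a_n,b)$ with $b \gtrsim a_{n+1}$ toward the accumulation point $0$; since $\mu([0,t]) \asymp t^{2\delta-1}/\log(1/t)$ there (the $\log$ is the prime-counting density), this yields a spike of height $F(x,r_n) \asymp a_{n+1}^{1-\delta}/\log a_{n+1}$, and $F$ only decreases off these scales. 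Monotonicity of $\Psi$ and doubling of $\psi$ then let me replace the supremum of $F/\Psi$ over each block $[\lambda^{-k-1},\lambda^{-k}]$ by its value at the spike, reducing the problem to whether the event $E_n = \{\, a_{n+1}^{1-\delta} \ge (\log a_{n+1})\,\Psi(r_n)\,\}$ occurs infinitely often.

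Since the conditional law of a digit is $\Pr[a_{n+1}=p] \asymp p^{-2\delta}$, one gets $\Pr[a_{n+1}\ge A]\asymp A^{1-2\delta}/\log A$; solving the threshold $(a^\ast)^{1-\delta}/\log a^\ast = y$ with $y = \Psi(r_n)$ gives $a^\ast \asymp y^{1/(1-\delta)}(\log y)^{1/(1-\delta)}$, hence $\Pr[E_n]\asymp y^{(1-2\delta)/(1-\delta)}(\log y)^{-\delta/(1-\delta)}$, which is exactly the summand of \eqref{HDseries}. Passing from the level index $n$ to the scale index $k$ via the ergodic theorem $\log q_n \sim n\chi$ (the Lyapunov exponent of $\mu$) spaces the $r_n$ uniformly in $\log(1/r)$, so that $\sum_n \Pr[E_n] \asymp \sum_k y_k^{(1-2\delta)/(1-\delta)}(\log y_k)^{-\delta/(1-\delta)}$ with $y_k = \Psi(\lambda^{-k})$; $\lambda$-independence is the usual condensation argument for monotone summands. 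A Borel--Cantelli argument closes the loop: if the series converges then for every $\varepsilon>0$ the event $F\ge \varepsilon\Psi$ holds only finitely often (changing $\varepsilon$ only rescales the summand by a constant), so $\overline{d}{}^\psi_\mu = 0$ and $\HH^\psi(\mu)=\infty$; if it diverges then for every $M$ the event $F \ge M\Psi$ holds infinitely often, so $\overline{d}{}^\psi_\mu = \infty$ and $\HH^\psi(\mu) = 0$.

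The main obstacle is the divergence half together with the level-to-scale passage, which are intertwined: the spike scale $r_n \asymp |\Delta_n|/a_{n+1}$ itself depends on the random entry $a_{n+1}$ defining $E_n$, so a large entry simultaneously raises the spike and displaces the scale at which it is felt, coupling the events, the scales, and the gauge values. Pinning down the two $\log$ powers in \eqref{HDseries} requires large-deviation control of $\log q_n$ about $n\chi$ and a careful count of how many levels contribute a spike near each $\lambda$-adic scale, fed by the prime number theorem with error terms (the large prime gaps of Erd{\doubleacute o}s--Rankin being the corresponding input for the companion, lower-density, packing statement). The second difficulty is that the $E_n$ are not independent, so the second Borel--Cantelli lemma must be replaced by a second-moment estimate: one checks that $\sum_{m\ne n}\bigl(\Pr[E_m\cap E_n]-\Pr[E_m]\Pr[E_n]\bigr)$ is dominated by $\sum_n \Pr[E_n]$, which rests on the exponential decay of correlations for the Gibbs measure $\mu$ under the Gauss map.
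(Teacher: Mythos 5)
Your overall architecture --- Rogers--Taylor density theorem, identification of density spikes produced by the digit $a_{n+1}$ accumulating toward $0$, the tail estimate $\mu(\{a\ge A\})\asymp A^{1-2\delta}/\log A$, and a quasi-independent Borel--Cantelli argument --- coincides with the paper's, and your spike height $a_{n+1}^{1-\delta}/\log a_{n+1}$ and the resulting summand of \eqref{HDseries} are computed correctly. However, there is a genuine gap in the step where you assert that ``$F$ only decreases off these scales.'' The upper density is a supremum over \emph{all} radii, and for radii between $\diam([\omega\given n+1])$ and $a_{n+1}^{-1}\diam([\omega\given n])$ the ball $B(x,r)$ sees only the sibling cylinders indexed by primes near $a_{n+1}$; in rescaled coordinates the density there is $x^{-\delta}\,\#(P\cap B(a,x))$ for $1\le x\le a/3$. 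With only the trivial bound $\#(P\cap B(a,x))\le 2x+1$, the choice $x\asymp a/\log a$ produces a competing value $\asymp a^{1-\delta}/\log^{1-\delta}(a)$, which exceeds your spike by a factor $\log^{\delta}a$ --- enough to alter the convergence of \eqref{HDseries}, whose summand is sensitive to powers of $\log y$. The paper closes this regime with a short-interval prime-counting input (Hoheisel type): $\#(P\cap B(a,x))\lesssim (x/a)^\delta f(a)$ for $1\le x\le a/3$, which forces the intermediate scales to be dominated by the spike at $x\asymp a$. Without some such estimate your convergence half ($\HH^\psi(\mu)=\infty$, i.e.\ $\overline D^\psi_\mu=0$ a.e.) does not follow; your reduction to the events $E_n$ silently discards these scales.

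The second issue is the coupling you yourself flag as ``the main obstacle'': the spike scale $r_n\asymp|\Delta_n|/a_{n+1}$ depends on the digit defining $E_n$, and your proposed fix (large-deviation control of $\log q_n$ about $n\chi$) is neither carried out nor actually needed. The paper's resolution is structural: the Borel--Cantelli events are indexed by the scale $k$ rather than the level $n$, via the stopping words $W_k=\{\omega:\lambda^{-(k+1)}<\|\phi_\omega'\|\le\lambda^{-k}\}$ followed by a digit in a deterministic set $J_k\subset P$, so that $\Psi$ is evaluated at the deterministic scale $\lambda^{-k}$; the residual displacement of the scale by the factor coming from the digit contributes, after summation, exactly the Lyapunov exponent $-\sum_a\mu(a)\log\|\phi_a'\|$, which is finite, and quasi-independence follows from bounded distortion alone (no decay-of-correlations machinery). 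Rearranging $\sum_k\mu(J_k)=\sum_a\mu(a)\,\#\{k:a\in J_k\}$ then yields \eqref{HDseries} directly, with no level-to-scale conversion and no ergodic-theorem input beyond finiteness of the Lyapunov exponent. As written, your plan leaves both of these essential points open.
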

\noindent Note that $1/2 < \delta \approx 0.657 < 1$ \cite[Table 1 and Section 3]{CLU2}, so the exponent in the numerator is negative.

The following corollary negatively resolves \cite[Problem 7.1]{Mauldin} for sufficiently regular dimension functions, for example Hardy $L$-functions \cite{Hardy,Hardy2}.

\begin{corollary}
For any doubling dimension function $\psi$ such that $\Psi(r) = r^{-\delta} \psi(r)$ is monotonic, we have $\HH^\psi(\Lambda_P) \in \{0,\infty\}$.
\end{corollary}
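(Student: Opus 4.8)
The plan is to read off both possible values of $\HH^\psi(\Lambda_P)$ from Theorem~\ref{theoremHD} via the trivial but decisive comparison $\HH^\psi(\mu) \le \HH^\psi(\Lambda_P)$. First I would note that, for a fixed $\lambda > 1$, the series \eqref{HDseries} either converges or diverges, with no third possibility, so Theorem~\ref{theoremHD} already delivers the dichotomy $\HH^\psi(\mu) \in \{0,\infty\}$ with nothing intermediate to exclude. Since $\mu = \mu_P$ is supported on $\Lambda_P$, the set $A = \Lambda_P$ is admissible in the infimum defining $\HH^\psi(\mu)$, so $\HH^\psi(\mu) \le \HH^\psi(\Lambda_P)$. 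The convergent case is then immediate: if \eqref{HDseries} converges, then $\HH^\psi(\mu) = \infty$, and the comparison forces $\HH^\psi(\Lambda_P) = \infty$.

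The divergent case is where the real work lies, and it is the step I expect to be the main obstacle, because there the inequality points the wrong way. From $\HH^\psi(\mu) = 0$ one only extracts a Borel set $A \subseteq \Lambda_P$ with $\mu(\Lambda_P \setminus A) = 0$ and $\HH^\psi(A) = 0$ (the infimum is attained by intersecting a countable family of near-optimal full-measure sets). By subadditivity this gives $\HH^\psi(\Lambda_P) = \HH^\psi(\Lambda_P \setminus A)$, so everything reduces to showing that the $\mu$-negligible remainder $N \df \Lambda_P \setminus A$ is itself $\HH^\psi$-null. This cannot follow from $\mu$ alone, since $\mu$-nullity never implies $\HH^\psi$-nullity in general; instead I would produce efficient covers of \emph{all} of $\Lambda_P$ directly, rather than of a full-measure subset. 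Concretely, I would reexamine the covers built in the proof of Theorem~\ref{theoremHD}: these are assembled from cylinders $[\omega]$ of the conformal system, on which $\mu([\omega]) \asymp (\operatorname{diam}[\omega])^\delta$, so that $\psi(\operatorname{diam}[\omega]) \asymp \mu([\omega])\,\Psi(\operatorname{diam}[\omega])$ and the total covering cost is governed by the series \eqref{HDseries}. The aim is to organize the cylinder cover so that it captures every point of $\Lambda_P$, including the eventually-``$\psi$-inefficient'' points that make up $N$, using the Markov (self-conformal) structure together with the doubling of $\psi$ and the monotonicity of $\Psi$ to pass between comparable scales.

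The crux, and the delicate point to get right, is the uniform control of the dimension function under the inverse branches $\phi_\omega$: these rescale by factors $|\phi_\omega'| \to 0$ that are not bounded below, so conformal copies of $\Lambda_P$ are not $\HH^\psi$-comparable to $\Lambda_P$ on the nose, and the slowly-varying correction $\Psi$ must be tracked uniformly across infinitely many cylinders and scales. I expect the doubling hypothesis on $\psi$ and the monotonicity of $\Psi$ to be exactly what makes this bookkeeping go through, with the divergence of \eqref{HDseries} driving the total covering cost to zero along a suitable sequence of scales, so that $\HH^\psi(\Lambda_P) = 0$.
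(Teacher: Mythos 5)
Your treatment of the convergent case is correct and is essentially what the paper's remark uses: $\mu_P$ is supported on $\Lambda_P$, so $\HH^\psi(\mu_P) \le \HH^\psi(\Lambda_P)$, and convergence of \eqref{HDseries} forces $\HH^\psi(\Lambda_P) = \infty$. The divergent case, however, is a genuine gap, and the route you sketch is unlikely to close it. First, there are no covers to ``reexamine'' in the proof of Theorem \ref{theoremHD}: that theorem is proved via the Rogers--Taylor--Tricot density theorem (Corollary \ref{corollaryRTT}), whose upper bound $\HH^\psi(A) \lesssim \mu(\R)\sup_{x\in A} 1/\overline D_\mu^\psi(x)$ annihilates only the full-$\mu$-measure set on which $\overline D_\mu^\psi = \infty$ and says nothing about the $\mu$-null remainder of $\Lambda_P$. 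Second, and more seriously, you are committing to the strictly stronger conclusion $\HH^\psi(\Lambda_P) = 0$ whenever \eqref{HDseries} diverges. The corollary does not assert this, the paper does not prove it, and it is not clear it is even true: when $\Psi$ is increasing but the series still diverges, the value $\HH^\psi(\Lambda_P) = \infty$ is entirely consistent with everything established. So the ``crux'' you identify (uniformly efficient cylinder covers of all of $\Lambda_P$, including the $\mu$-null part) is not a bookkeeping issue that doubling and monotonicity will resolve; it is a missing idea.

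The paper sidesteps the problem with a short indirect argument: suppose for contradiction that $0 < \HH^\psi(\Lambda_P) < \infty$. Then the restriction $\HH^\psi\given \Lambda_P$ is a conformal measure for the prime Gauss IFS, hence by the uniqueness in Proposition \ref{propositionexistsconformal} a scalar multiple of $\mu_P$. Since every Borel set $A$ of full $\HH^\psi\given\Lambda_P$-measure satisfies $\HH^\psi(A) \ge \HH^\psi(A\cap\Lambda_P) = \HH^\psi(\Lambda_P)$, one gets $\HH^\psi(\Lambda_P) = \HH^\psi\big(\HH^\psi\given\Lambda_P\big) = \HH^\psi(\mu_P) \in \{0,\infty\}$ by Theorem \ref{theoremHD}, a contradiction. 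In other words, the dichotomy for the \emph{set} is deduced from the dichotomy for the \emph{measure} not by transferring covers, but by observing that a positive finite value would force the set-level and measure-level quantities to coincide. That uniqueness-of-conformal-measure step is exactly what your outline is missing.
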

\begin{proof}
By way of contradiction suppose that $0 < \HH^\psi(\Lambda_P) < \infty$. Then $\HH^\psi\given \Lambda_P$ is a conformal measure on $\Lambda_P$ and therefore a scalar multiple of $\mu_P$, and thus $\HH^\psi(\Lambda_P) = \HH^\psi(\HH^\psi\given \Lambda_P) \in \{0,\infty\}$ by Theorem \ref{theoremHD}, a contradiction.
\end{proof}

\begin{remark*}
Letting $\psi(r) = r^\delta \log^s(1/r)$ with $s > \frac{1-\delta}{2\delta-1}$ gives an example of a function that satisfies the hypotheses of Theorem \ref{theoremHD} such that the series \eqref{HDseries} converges. For this function, we have $\HH^\psi(\Lambda_P) \geq \HH^\psi(\mu) = \infty$. This affirmatively answers \cite[Problem 2 in Section 7]{MauldinUrbanski4}.
\end{remark*}

\begin{theorem}
\label{theoremPD1}
Let $\mu$ be the conformal measure on $\Lambda_P$, let $\theta = 21/40$, and let
\begin{equation}
\label{phidef}
\phi(x) = \frac{\log(x)\log\log(x)\log\log\log\log(x)}{\log^2\log\log(x)}\cdot
\end{equation}
Then
\begin{align}
\PP^\psi(\mu) &= \infty \text{ where } \psi(r) = r^\delta \phi^{-\delta}(\log(1/r)). \label{Ppsimuinfty}\\
\PP^\psi(\mu) &= 0 \text{ where } \psi(r) = r^\delta \log^{-s}(1/r) \text{ if } s > \theta\delta/(2\delta - 1) \label{Ppsimu0}
\end{align}
\end{theorem}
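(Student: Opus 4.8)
The plan is to pass from packing measures of $\mu$ to a pointwise lower-density computation, and then to reduce that computation to the statistics of prime gaps sampled along the orbit of a $\mu$-typical point. The starting point is the standard Taylor--Tricot density characterization of $\PP^\psi(\mu)$: if $\liminf_{r\to0}\mu(B(x,r))/\psi(r)=0$ for $\mu$-a.e.\ $x$, then $\PP^\psi(\mu)=\infty$, whereas if this $\liminf$ equals $+\infty$ for $\mu$-a.e.\ $x$, then $\PP^\psi(\mu)=0$. Thus \eqref{Ppsimuinfty} will follow from showing the lower density vanishes a.e.\ for the gauge $\psi(r)=r^\delta\phi^{-\delta}(\log(1/r))$, and \eqref{Ppsimu0} from showing it is infinite a.e.\ for $\psi(r)=r^\delta\log^{-s}(1/r)$ when $s>\theta\delta/(2\delta-1)$.

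Next I would make the local density explicit. Write $x=[a_1,a_2,\dots]$ with the $a_i$ prime, let $L_n=|I_{a_1\cdots a_n}|$, and let $g_n$ be the distance from $a_{n+1}$ to the nearest prime. Bounded distortion gives $\mu(I_\omega)\asymp|I_\omega|^\delta$, and the Gauss geometry places the child cylinder $I_{\omega p}$ inside $I_\omega$ at relative position $\asymp 1/p$ with relative length $\asymp 1/p^2$, so the prime gaps around $a_{n+1}$ leave empty corridors inside $I_{a_1\cdots a_n}$. A scale-by-scale inspection shows that, as $r$ ranges over $[L_{n+1},L_n]$, the ratio $\mu(B(x,r))/r^\delta$ is minimised at the critical radius $r_n\asymp L_{n+1}g_n$, where the ball sees only the single child cylinder; there $\mu(B(x,r_n))/r_n^\delta\asymp g_n^{-\delta}$ while $\log(1/r_n)\asymp\log(1/L_{n+1})$, and intermediate scales at which the ball captures several children give strictly larger ratios. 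Hence the lower density equals $\liminf_n g_n^{-\delta}/\Psi(r_n)$ up to constants, which for the two gauges becomes $\liminf_n(\phi(\log(1/r_n))/g_n)^\delta$ and $\liminf_n\log^s(1/r_n)/g_n^\delta$ respectively.

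For \eqref{Ppsimu0} I would run a first-moment Borel--Cantelli argument using the unconditional upper bound on prime gaps $g_n\ll a_{n+1}^{\theta}$ with $\theta=21/40$ (Baker--Harman--Pintz). Since $\mu(\{a_{n+1}=p\})\asymp p^{-2\delta}$ and, for $\mu$-a.e.\ $x$, $\log(1/r_n)\asymp\log(1/L_{n+1})\asymp 2\kappa n$ by the ergodic theorem (with $\kappa=\int\log a_1\,d\mu<\infty$, as $2\delta>1$), the bad event $\{g_n^\delta>\log^s(1/r_n)\}$ forces $a_{n+1}\gg n^{s/(\theta\delta)}$, an event of $\mu$-measure $\asymp n^{-s(2\delta-1)/(\theta\delta)}/\log n$. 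This sum converges exactly when $s>\theta\delta/(2\delta-1)$, so $g_n^{\delta}\le\log^{s'}(1/r_n)$ eventually for any $s'\in(\theta\delta/(2\delta-1),s)$, whence $\log^s(1/r_n)/g_n^\delta\ge\log^{s-s'}(1/r_n)\to\infty$ and $\PP^\psi(\mu)=0$. The routine points to check are the uniform distortion estimates and the effect of atypically large $a_{n+1}$, which only enlarges $\log(1/r_n)$ and so can only help.

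For \eqref{Ppsimuinfty} the inequality must be reversed: I need $g_n\gg\phi(\log(1/r_n))$ for infinitely many $n$, a.e. The gauge $\phi$ is chosen to match the Rankin/Ford--Green--Konyagin--Maynard--Tao lower bound for large prime gaps, so primes carrying gaps of the required size exist; the difficulty is that existence alone is not enough for a converse Borel--Cantelli. I expect the main obstacle to be precisely a counting lower bound: one must know that the primes $p\asymp N$ adjacent to a gap of size $\gg\phi(N)$ carry $\mu$-mass summing to a divergent series, and then upgrade this to $g_n\gg\phi(\log(1/r_n))$ infinitely often by invoking the exponential ($\psi$-mixing) decay of correlations of the Gauss system to supply the required quasi-independence. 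It is here that a number-theoretic conjecture on the frequency of extremal prime gaps enters, so I would isolate exactly the counting statement needed, state it as a hypothesis, and establish \eqref{Ppsimuinfty} conditionally on it.
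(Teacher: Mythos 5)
Your reduction via the Taylor--Tricot density theorem and your treatment of \eqref{Ppsimu0} are essentially the paper's argument in different bookkeeping (the paper sums over derivative-scales $k$ and primes $a$ via the series $\Sigma_{\alpha,-1}'$ of Lemma \ref{lemmasigmaprime}, you sum over times $n$; both give the threshold $s>\theta\delta/(2\delta-1)$ from Baker--Harman--Pintz). But there is a genuine gap in your plan for \eqref{Ppsimuinfty}: you propose to prove it only conditionally on a counting hypothesis for extremal prime gaps, whereas the statement is unconditional and the paper proves it so. The two inputs you are missing are these. First, the existence input is not a conjecture: Maier's theorem on \emph{chains} of large gaps gives $\limsup_n \min(d_{n+1},d_{n+2})/\phi(p_n)>0$ with $\phi$ exactly the Rankin function \eqref{phidef}, which supplies infinitely many primes $a_\ell$ with $P\cap B(a_\ell,c\phi(a_\ell))=\{a_\ell\}$ (the two-sided isolation you correctly identify as necessary). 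Second, and more importantly, no frequency or density lower bound on such primes is needed, because the gauge is calibrated so that each such prime contributes a term $a_\ell^{-2\delta}\,\phi^{-1}(\gamma\phi(a_\ell))\asymp a_\ell^{\gamma-2\delta}$ to the relevant divergent series, and for $\gamma>2\delta$ these terms individually tend to infinity; bare infinitude of the set $\{a_\ell\}$ already forces divergence. The ``infinitely often along the orbit'' upgrade is then the quasi-independent Borel--Cantelli lemma (Lemma \ref{lemmamuJk}), which requires only divergence of $\sum_k\mu(J_k)$ -- your instinct that some quasi-independence is needed is right, but the counting statement you would ``isolate as a hypothesis'' is vacuous here. (The genuinely conjectural input, Conjecture \ref{conjecturecramergranville}, is needed only for the sharper gauge of Theorem \ref{theoremPD2}.)

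A secondary problem is your claim that, for $r$ ranging over $[L_{n+1},L_n]$, the ratio $\mu(B(x,r))/r^\delta$ is minimized at the nearest-prime scale $r_n\asymp L_{n+1}g_n$ and that ``intermediate scales at which the ball captures several children give strictly larger ratios.'' This is false in general: if $a_{n+1}$ has one very close prime neighbour followed by a long empty stretch of length $X$, the scale $x\asymp X$ gives ratio $\asymp 2X^{-\delta}$, which is far smaller than $g_n^{-\delta}$. The correct local quantity is $\min_{1\leq x\leq a/3}x^{-\delta}\#(P\cap B(a,x))$, which is what the paper's $\Sigma_{\alpha,-1}'$ encodes. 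For \eqref{Ppsimu0} this is repairable, since BHP bounds \emph{all} gaps near $a$ by $\ll a^\theta$ and hence $\#(P\cap B(a,x))\gtrsim\max(1,x/a^\theta)$ uniformly in $x$, recovering $x^{-\delta}\#(P\cap B(a,x))\gtrsim a^{-\theta\delta}$ at every scale; but as written your lower bound on the lower density only controls the single scale $r_n$ and does not justify the $\liminf$.
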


We can get a stronger result for packing measure by assuming the following:

\begin{conjecture}
\label{conjecturecramergranville}
Let $p_n$ denote the $n$th prime, and let $d_n = p_{n + 1} - p_n$. For each $k\geq 1$ let
\begin{align*}
R_k &\df \limsup_{n\to\infty} \frac{\min(d_{n+1},\ldots,d_{n + k})}{\log^2(p_n)}\cdot
\end{align*}
Then $0 < R_k < \infty$ for all $k\in\N$.
\end{conjecture}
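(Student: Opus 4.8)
Since this is a quantitative strengthening of the Cram\'er--Granville conjecture, a full proof is beyond current technology, so the plan is really to give the heuristic that makes it credible, to record the partial and conditional inputs one would assemble, and to be honest about the obstructions. I would treat the two inequalities separately. For the upper bound, note that $\min(d_{n+1},\dots,d_{n+k}) \le d_{n+1}$ and $p_{n+1}/p_n \to 1$, so $R_k \le \limsup_n d_n/\log^2 p_n =: R_1$; hence $R_k < \infty$ reduces to the Cram\'er-type bound $d_n = O(\log^2 p_n)$. This is exactly the upper half of the classical maximal-gap conjecture and is currently inaccessible: the best unconditional estimate $d_n \ll p_n^{0.525}$ (Baker--Harman--Pintz), and even $d_n \ll \sqrt{p_n}\log p_n$ under the Riemann Hypothesis, exceed $\log^2 p_n$ by an enormous margin. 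So the upper bound can only be posited.

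For the lower bound the target is to produce, for some $c = c(k) > 0$ and infinitely many $n$, a run $d_{n+1},\dots,d_{n+k}$ of $k$ consecutive gaps each at least $c\log^2 p_n$. First I would record the prediction of Cram\'er's random model, in which each $m \sim x$ is independently ``prime'' with probability $1/\log x$. There, a block of $k+1$ consecutive ``primes'' spanning $k$ gaps each of length $c\log^2 x$ occurs at a fixed location with probability $\asymp (\log x)^{-(k+1)} x^{-ck}$; summing over the $\asymp x$ available locations gives expected count $\asymp x^{\,1-ck}(\log x)^{-(k+1)}$, positive exactly when $c < 1/k$. This predicts $R_k \asymp 1/k$ (finite and positive), with the constant plausibly inflated by the Granville factor $2e^{-\gamma}$ coming from the bias of primes against small moduli, just as when $k = 1$.

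To convert the lower-bound heuristic into a theorem I would adapt the Erd\H{o}s--Rankin covering-sieve construction in the sharp form of Ford--Green--Konyagin--Maynard--Tao, which produces one long prime-free interval by an efficient choice of residue classes. The genuinely new demand is \emph{clustering}: one must sieve $k$ adjacent windows so that each becomes prime-free while a prime survives at each of the $k-1$ junctions, and must do so at the full scale $\log^2 p_n$ rather than the $\dfrac{\log p_n \log\log p_n \log\log\log\log p_n}{\log\log\log p_n}$ scale that the covering method currently attains.

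This last point is the crux. The known large-gap constructions already fall short of $\log^2 p_n$ by slowly growing factors, so even $R_1 > 0$ is open; imposing the correlation constraint among $k$ consecutive gaps only makes matters harder. The main obstacle is thus the same one that keeps Cram\'er--Granville open in both directions, and for $k \ge 2$ a clustering requirement is layered on top. I therefore present the statement as a reasonable, heuristically well-supported hypothesis, under which the packing-measure conclusions are to be proved.
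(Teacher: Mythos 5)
This statement is a conjecture: the paper offers no proof, only the Cram\'er-model heuristic in the remark (and footnote) immediately following it, which predicts $R_k = 1/k$, possibly corrected to $2e^{-\gamma}/k$. Your proposal correctly identifies it as currently unprovable and reproduces essentially the same heuristic justification the paper gives, so it matches the paper's treatment.
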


\begin{remark*}
The case $k = 1$ of Conjecture \ref{conjecturecramergranville} is known as the Cram\'er--Granville conjecture. Early heuristics led Harald Cram\'er to conjecture that it is true with $R_1 = 1$ \cite{Cramer}, but improved heuristics now suggest that $R_1 = 2e^{-\gamma}$, where $\gamma$ is the Euler--Mascheroni constant; see \cite{Granville, Granville2,Pintz}. Applying Cram\'er's heuristics to the case $k\geq 2$ of Conjecture \ref{conjecturecramergranville} yields the prediction that $R_k = 1/k$,\Footnote{Specifically, assume that each integer $n$ has probability $\frac1{\log(n)}$ of being prime. Under this assumption, for $m\leq n$ the probability that no integers in an interval $\OC{n}{n+m}$ are prime is approximately $(1 - \frac1{\log(n)})^m \asymp \exp(-\frac{m}{\log(n)})$. Thus, the probability that $d_n > m$ is approximately $\exp(-\frac{m}{\log(p_n)})$, since $d_n > m$ if and only if the interval $\OC{p_n}{p_n+m}$ has no primes. So the probability that $\min(d_{n+1},\ldots,d_{n+k}) > m$ is approximately $\exp(-\frac{km}{\log(p_n)})$. Now fix a constant $C > 0$. The probability that $\min(d_{n+1},\ldots,d_{n+k}) \geq C\log^2(p_n)$ is approximately $\exp(-\frac{kC\log^2(p_n)}{\log(p_n)}) = p_n^{-kC}$. Now, by the Borel--Cantelli lemma, the probability is 1 that $\min(d_{n+1},\ldots,d_{n+k}) \geq C\log^2(p_n)$ for infinitely many $n$ if and only if the series $\sum_n p_n^{-kC}$ diverges, which by the prime number theorem is true if and only if $C \leq 1/k$. It follows (under this probabilistic model) that $R_k = 1/k$, where $R_k$ is as in Conjecture \ref{conjecturecramergranville}.} so perhaps an appropriate correction would be $R_k = 2 e^{-\gamma}/k$.
\end{remark*}

\begin{theorem}
\label{theoremPD2}
If the cases $k=1,2$ of Conjecture \ref{conjecturecramergranville} are true, then $\PP^\psi(\mu) \in (0,\infty)$, where $\psi$ is given by the formula
\begin{equation}
\label{psidef}
\psi(r) = r^\delta \log^{-2\delta}\log(1/r),
\end{equation}
\end{theorem}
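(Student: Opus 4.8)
The plan is to reduce the computation of $\PP^\psi(\mu)$ to the pointwise behaviour of the \emph{lower $\psi$-density}
\[
\underline D^\psi_\mu(x) \df \liminf_{r\to 0}\frac{\mu(B(x,r))}{\psi(r)}.
\]
By the Taylor--Tricot density theorem relating the packing measure of a measure to its lower density, if $\underline D^\psi_\mu(x)\geq c>0$ for $\mu$-a.e.\ $x$ then $\PP^\psi(\mu)<\infty$, while if $\underline D^\psi_\mu(x)\leq C<\infty$ for $\mu$-a.e.\ $x$ then $\PP^\psi(\mu)>0$. So it suffices to show that $\underline D^\psi_\mu$ is bounded between two positive constants almost everywhere. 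Writing $\psi(r)=r^\delta(\log\log(1/r))^{-2\delta}$, this amounts to controlling the quantity $\liminf_{r\to 0}\big(\mu(B(x,r))/r^\delta\big)(\log\log(1/r))^{2\delta}$.

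First I would establish the geometric input describing $\mu(B(x,r))$ along the continued fraction expansion $x=[0;a_1,a_2,\dots]$ with convergent denominators $q_m$. Using conformality ($\mu(I_\omega)\asymp\operatorname{diam}(I_\omega)^\delta$) and bounded distortion, the children of a cylinder $I_{a_1\cdots a_{m-1}}$ are indexed by the primes, and two children corresponding to consecutive primes $p<p'$ are separated by a gap whose size \emph{relative to the child} is comparable to the prime gap $p'-p$. Consequently, if $a_m=p$ and $d_m^-,d_m^+$ denote the two prime gaps adjacent to $p$, then as $r$ increases the ratio $\mu(B(x,r))/r^\delta$ dips to a local minimum $\asymp \min(d_m^-,d_m^+)^{-\delta}$, attained at the scale $r_m^\ast$ with $1/r_m^\ast\asymp q_m^2/\min(d_m^-,d_m^+)$, before recovering, since larger radii meet the dense accumulation of children near the parent's endpoint and hence capture comparable or greater mass. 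Because $\min(d_m^\pm)\leq C\log^2 a_m\ll\log^2 q_m$ (by the $R_1<\infty$ bound), we have $\log(1/r_m^\ast)\asymp 2\log q_m$ and therefore $\log\log(1/r_m^\ast)\asymp\log\log q_m$. Combining the dip value with the slowly varying gauge factor, and using L\'evy's theorem $\log q_m\asymp m$ (valid $\mu$-a.e.\ since $\int\log a_1\,d\mu<\infty$), I obtain
\[
\underline D^\psi_\mu(x)\asymp\Big[\liminf_{m\to\infty}\frac{(\log\log q_m)^2}{\min(d_m^-,d_m^+)}\Big]^\delta\asymp\Big[\liminf_{m\to\infty}\frac{(\log m)^2}{\min(d_m^-,d_m^+)}\Big]^\delta.
\]
Thus everything reduces to two almost-sure statements about the prime gaps encountered along the orbit: (I) $\min(d_m^-,d_m^+)\lesssim(\log m)^2$ for all large $m$, and (II) $\min(d_m^-,d_m^+)\gtrsim(\log m)^2$ for infinitely many $m$.

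For (I), which yields $\PP^\psi(\mu)<\infty$, I would use only the upper bound $R_1<\infty$: eventually every prime gap satisfies $d_n\leq C\log^2 p_n$, so $\min(d_m^-,d_m^+)\leq d_m^+\leq C\log^2 a_m$. It therefore suffices to rule out digits that are too large relative to their level, i.e.\ to show $a_m\leq m^\beta$ eventually for some fixed $\beta$. This follows from the first Borel--Cantelli lemma together with the digit tail $\mu(a_m>T)\asymp T^{-(2\delta-1)}/\log T$: choosing $\beta>1/(2\delta-1)$ makes $\sum_m\mu(a_m>m^\beta)$ converge, so $a_m\leq m^\beta$ for all large $m$, whence $\min(d_m^\pm)\leq C\beta^2(\log m)^2$.

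Statement (II), which yields $\PP^\psi(\mu)>0$, is the crux and the main obstacle, because the conjecture supplies only the \emph{existence} (from $R_2>0$) of infinitely many primes $p^{(1)}<p^{(2)}<\cdots$ with both adjacent prime gaps at least $c'\log^2 p^{(k)}$, and gives no information about their density; a naive Borel--Cantelli computation is therefore unavailable. Instead I would exploit the mixing of the Gauss map: for each such doubly isolated prime $p^{(k)}$ I would show that, for $\mu$-a.e.\ $x$, the digit value $p^{(k)}$ occurs at some level $m\leq(p^{(k)})^{\beta'}$, where $\beta'>2\delta$ is chosen so that the expected number $\asymp(p^{(k)})^{\beta'-2\delta}$ of occurrences of the value $p^{(k)}$ among the first $(p^{(k)})^{\beta'}$ digits tends to infinity. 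Quantitative mixing then makes the probability of no occurrence summable in $k$, so that for a.e.\ $x$ infinitely many $p^{(k)}$ appear as digits at levels $m$ with $\log m\leq\beta'\log p^{(k)}$; at each such level $\min(d_m^-,d_m^+)\geq c'\log^2 p^{(k)}\gtrsim(\log m)^2$, giving (II). The delicate points will be making the mixing estimate uniform over the growing window of levels and over the sparse, quantitatively uncontrolled sequence $(p^{(k)})$, and checking that the two-sided isolation of $p^{(k)}$ as a prime genuinely isolates the corresponding cylinder in $\Lambda_P$ at the claimed scale. Assembling (I) and (II) gives $\underline D^\psi_\mu(x)\in(0,\infty)$ almost everywhere, and the density theorem then delivers $\PP^\psi(\mu)\in(0,\infty)$.
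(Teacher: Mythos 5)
Your proposal is correct in substance, and its geometric core coincides with the paper's: both reduce $\PP^\psi(\mu)$ to the lower density $\underline D_\mu^\psi$ via the Taylor--Tricot theorem, both identify the dips of $r\mapsto\mu(B(x,r))/r^\delta$ at level $m$ with the quantity $\min_{1\leq t\leq a_m/3}t^{-\delta}\#(P\cap B(a_m,t))$ governed by the two-sided prime gaps around the digit $a_m$, and both hinge on the same thresholds ($\beta>1/(2\delta-1)$ for finiteness, $\beta'>2\delta$ for positivity, both coming from the digit tail $\mu(a>T)\asymp T^{1-2\delta}/\log T$). Where you genuinely diverge is in how the two Borel--Cantelli halves are closed. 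The paper runs everything through a single divergence-type Borel--Cantelli lemma (Lemma \ref{lemmamuJk}) for the quasi-independent events $A_k=\bigcup\{[\omega a]:\omega\in W_k,\ a\in J_k\}$, plus ergodicity to upgrade positive measure to full measure; the observation that makes the sparseness of the doubly isolated primes irrelevant is that each such prime $a$ lies in $J_{k,\alpha,-1}$ for $\asymp a^{(\alpha/C_1)^{1/2\delta}}$ values of $k$ (because the gauge involves $\log\log$), so $\sum_k\mu(J_k)\gtrsim\sum_{a\in I}a^{\gamma-2\delta}$ diverges for \emph{any} infinite $I$ once $\gamma>2\delta$. You reach the same threshold from the other side, via a convergence Borel--Cantelli on the complements: the probability that the digit value $p^{(k)}$ fails to appear among the first $(p^{(k)})^{\beta'}$ levels is at most $\bigl(1-c\,(p^{(k)})^{-2\delta}\bigr)^{(p^{(k)})^{\beta'}}\leq e^{-c(p^{(k)})^{\beta'-2\delta}}$, summable in $k$ regardless of the density of $I$. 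The ``delicate'' mixing input you worry about is in fact elementary here: the quasi-Bernoulli property \eqref{boundeddistortion} gives $\mu(a_{m+1}=p\mid a_1,\dots,a_m)\geq c\,\mu([p])$ uniformly, which is all you need. Your route yields the slightly stronger a.e.-eventually statement and avoids ergodicity; the paper's route avoids any mixing estimate. One imprecision to repair: the displayed claim $\underline D^\psi_\mu(x)\asymp\bigl[\liminf_m(\log m)^2/\min(d_m^-,d_m^+)\bigr]^\delta$ is not a genuine $\asymp$, since the ratio can dip further at scales beyond the nearest siblings when a prime desert sits slightly farther from $a_m$; but under $R_1<\infty$ the true minimum $\min_t t^{-\delta}\#(P\cap B(a_m,t))$ is squeezed between $(C\log^2 a_m)^{-\delta}$ and $\min(d_m^-,d_m^+)^{-\delta}$, which is exactly what your halves (I) and (II) respectively require, so the argument survives once restated as two one-sided bounds.
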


Note that the cases $k=1,2$ of Conjecture \ref{conjecturecramergranville} correspond to information about the lengths of one-sided and two-sided gaps in the primes, respectively.

\section{Preliminaries and notation}

\begin{convention}
In what follows, $A \lesssim B$ means that there exists a constant $C > 0$ such that $A \leq C B$. $A\asymp B$ means $A \lesssim B \lesssim A$. $A \lesssim_\plus B$ means there exists a constant $C$ such that $A \leq B + C$. $A \lesssim_{\plus,\times} B$ means that there exist constants $C_1,C_2$ such that $A \leq C_1 B + C_2$. 
\end{convention}

\begin{convention}
All measures and sets are assumed to be Borel, and measures are assumed to be locally finite. Sometimes we restate these hypotheses for emphasis.
\end{convention}

Recall that the \textsf{continued fraction expansion} of an irrational number $x\in (0,1)$ is the unique sequence of positive integers $(a_n)$ such that
\[
x = [0;a_1,a_2,\ldots] \df \cfrac{1}{a_1 + \cfrac{1}{a_2 + \ddots}}
\]
Given $E \subset \N$, we define the set $\Lambda_E$ to be the set of all irrationals in $(0,1)$ whose continued fraction expansions lie entirely in $E$. Equivalently, $\Lambda_E$ is the image of $E^\N$ under the \textsf{coding map} $\pi:\N^\N \to (0,1)$ defined by $\pi((a_n)) = [0;a_1,a_2,\ldots]$.

The set $\Lambda_E$ can be studied dynamically in terms of its corresponding \textsf{Gauss iterated function system}, i.e. the collection of maps $\Phi_E \df (\phi_a)_{a\in E}$, where
\[
\phi_a(x) \df \frac{1}{a + x}\cdot
\]
(The Gauss IFS $\Phi_E$ is a special case of a \textsf{conformal iterated function system} (see e.g. \cite{MauldinUrbanski1,MauldinUrbanski4,CLU2}), but in this paper we deal only with the Gauss IFS case.) Let $E^* = \bigcup_{n\geq 0} E^n$ denote the collection of finite words in the alphabet $E$. For each $\omega\in E^*$, let $\phi_\omega = \phi_{\omega_1}\circ\cdots \circ \phi_{\omega_{|\omega|}}$, where $|\omega|$ denotes the length of $\omega$. Then
\[
\pi(\omega) = \lim_{n\to\infty} \phi_{\omega\given[1,n]}(0).
\]
Equivalently, $\pi(\omega)$ is the unique intersection point of the \textsf{cylinder sets} $[\omega\given [1,n]]$, where
\[
[\omega] \df \phi_\omega([0,1]).
\]
Next, we define the \textsf{pressure} of a real number $s \geq 0$ to be
\[
\P_E(s) \df \lim_{n\to\infty} \frac1n \log \sum_{\omega\in E^n} \|\phi_\omega'\|^s
\]
where $\|\phi_\omega'\| \df \sup_{x\in [0,1]} |\phi_\omega'(x)|$. The Gauss IFS $\Phi_E$ is called \textsf{regular} if there exists $\delta = \delta_E \geq 0$ such that $\P_E(\delta_E) = 0$. The following result was proven in \cite{MauldinUrbanski1}.

\begin{proposition}[{\cite[Theorem 3.5]{MauldinUrbanski1}}]
\label{propositionexistsconformal}
Let $\Phi_E$ be a regular (Gauss) IFS. Then there exists a unique measure $\mu = \mu_E$ on $\Lambda_E$ such that
\[
\mu_E(A) = \sum_{a\in E} \int_{\phi_a^{-1}(A)} |\phi_a'(x)|^{\delta_E} \;\dee\mu_E(x)
\]
for all $A \subset [0,1]$.
\end{proposition}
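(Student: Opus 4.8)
The plan is to recognize the displayed identity as the eigenmeasure equation for the Perron--Frobenius (transfer) operator at the critical exponent and to produce $\mu$ as a fixed point. I would define $\mathcal{L} = \mathcal{L}_{\delta_E}$ on $f\in C([0,1])$ by $\mathcal{L}f(x) = \sum_{a\in E}|\phi_a'(x)|^{\delta_E}f(\phi_a(x))$. Taking $f = \mathbf{1}_A$ and unwinding the substitution $y = \phi_a(x)$ shows that the conformal identity for $\mu$ is precisely $\int f\,\dee\mu = \int\mathcal{L}f\,\dee\mu$ for all $f$, i.e. $\mathcal{L}^*\mu = \mu$; so the task is to produce a probability measure fixed by $\mathcal{L}^*$. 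The two structural facts that make $\mathcal{L}$ tractable despite $E$ being infinite are (i) the bounded distortion property of the Gauss IFS, giving a uniform $K$ with $|\phi_\omega'(x)|\asymp|\phi_\omega'(y)|$ and hence $|\phi_\omega'(x)|^{\delta_E}\asymp\|\phi_\omega'\|^{\delta_E}$ for all $x,y\in[0,1]$ and $\omega\in E^*$, and (ii) the summability $\sum_{a\in E}\|\phi_a'\|^{\delta_E}<\infty$, which is forced by finiteness of $\P_E(\delta_E) = 0$ (submultiplicativity of the partition sums $\sum_{|\omega|=n}\|\phi_\omega'\|^{\delta_E}$ makes $\P_E(\delta_E)\geq\log\sum_a\|\phi_a'\|^{\delta_E}$). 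Together these show $\mathcal{L}$ maps $C([0,1])$ boundedly into itself and that $\mathcal{L}\mathbf{1}(x) = \sum_{a}|\phi_a'(x)|^{\delta_E}$ is continuous and bounded away from $0$ and $\infty$.

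For existence I would run Schauder--Tychonoff. Since $[0,1]$ is compact, the set of Borel probability measures is convex and weak-$*$ compact, and the normalized map $T\nu = \mathcal{L}^*\nu/(\mathcal{L}^*\nu)([0,1])$ is a weak-$*$ continuous self-map of it (continuity of $\mathcal{L}$ on $C([0,1])$ yields weak-$*$ continuity of $\mathcal{L}^*$, and the denominator $\int\mathcal{L}\mathbf{1}\,\dee\nu$ stays in a fixed compact subinterval of $(0,\infty)$). A fixed point $\mu$ satisfies $\mathcal{L}^*\mu = \lambda\mu$ with $\lambda = \int\mathcal{L}\mathbf{1}\,\dee\mu$. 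To see $\lambda = 1$ --- the one place the regularity hypothesis enters essentially --- I iterate to get $\lambda^n = \int\mathcal{L}^n\mathbf{1}\,\dee\mu$, use $\mathcal{L}^n\mathbf{1} = \sum_{\omega\in E^n}|\phi_\omega'|^{\delta_E}\asymp\sum_{\omega\in E^n}\|\phi_\omega'\|^{\delta_E}$ uniformly by (i), take $n$th roots and let $n\to\infty$, whence $\lambda = e^{\P_E(\delta_E)} = 1$. It then remains to confirm $\mu$ lives on $\Lambda_E$: iterating the identity gives $\mu([\omega]) = \int_{[0,1]}|\phi_\omega'|^{\delta_E}\,\dee\mu\asymp\|\phi_\omega'\|^{\delta_E}\to 0$, so $\mu$ is nonatomic, and since the first-level cylinders are essentially disjoint with $\sum_{a\in E}\mu([a]) = \int\mathcal{L}\mathbf{1}\,\dee\mu = \mu([0,1])$, the gaps $[0,1]\butnot\bigcup_{|\omega|=n}[\omega]$ are $\mu$-null for every $n$, so $\mu$ is carried by $\Lambda_E = \bigcap_n\bigcup_{|\omega|=n}[\omega]$.

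For uniqueness I would first note that the estimate $\mu([\omega])\asymp\|\phi_\omega'\|^{\delta_E}$ holds with the same implied constants for every $\delta_E$-conformal measure, so any two such measures $\mu_1,\mu_2$ are boundedly mutually absolutely continuous; set $g = \dee\mu_1/\dee\mu_2$. Writing $G$ for the Gauss map (the common left inverse of the $\phi_a$) and applying conformality of both $\mu_i$ in a change-of-variables computation on each cylinder $[a]$ yields $g\circ\phi_a = g$ for $\mu_2$-a.e. point and every $a$, i.e. $g$ is $G$-invariant $\mu_2$-a.e. I then close with a Knopp-type density argument showing every $G$-invariant set $A$ is $\mu_2$-trivial: conformality and bounded distortion give $\mu_2(A\cap[\omega])/\mu_2([\omega])\asymp\mu_2(A)$ uniformly in $\omega$ (since $A\cap[\omega] = \phi_\omega(A)$ mod null), while the martingale theorem along the cylinder filtration shows this ratio tends to $\mathbf{1}_A$ a.e., forcing $\mu_2(A)\in\{0,1\}$. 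Hence the level sets of $g$ are $\mu_2$-trivial, $g$ is a.e. constant, and as both measures are probabilities the constant is $1$, giving $\mu_1 = \mu_2$.

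The main obstacle is the infinitude of $E$: it blocks a direct appeal to the compact transfer-operator theory available for finite subshifts, and it reappears as the danger that the fixed point escapes into the cusp of the Gauss map, charging the complement $[0,1]\butnot\Lambda_E$. Both difficulties are controlled by the single quantitative input $\sum_{a\in E}\|\phi_a'\|^{\delta_E}<\infty$ together with bounded distortion: it is exactly what makes $\mathcal{L}$ act continuously on $C([0,1])$, keeps $\mathcal{L}\mathbf{1}$ bounded (so that $T$ is a well-defined continuous self-map), and forces the mass balance $\sum_a\mu([a]) = \mu([0,1])$ that rules out any leakage off the limit set.
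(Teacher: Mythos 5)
The paper does not actually prove this proposition: it is imported wholesale from Mauldin--Urba\'nski \cite{MauldinUrbanski1}, Theorem 3.5, so there is no in-paper argument to compare against. Your proposal is essentially a correct reconstruction of the standard argument from that source --- Schauder--Tychonoff applied to the normalized dual transfer operator to produce an eigenmeasure, identification of the eigenvalue with $e^{\P_E(\delta_E)}=1$ by comparing $\int\mathcal{L}^n\mathbf{1}\,\dee\mu$ with the partition sums and taking $n$th roots, a mass-balance argument to confine the measure to $\Lambda_E$, and uniqueness via bounded mutual absolute continuity of any two conformal measures together with an ergodicity argument. One small correction: the finiteness of $\sum_{a\in E}\|\phi_a'\|^{\delta_E}$ comes from \emph{super}multiplicativity of the partition sums up to the distortion constant (bounded distortion gives $Z_{n+m}\gtrsim Z_nZ_m$, hence $\P_E(\delta_E)\gtrsim_\plus\log Z_1(\delta_E)$, so $\P_E(\delta_E)=0$ forces $Z_1(\delta_E)<\infty$); submultiplicativity, which you invoke, gives the opposite inequality $\P_E\leq\log Z_1$ and yields nothing. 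The conclusion you need is still true --- regularity presupposes that the pressure is finite at $\delta_E$, which in the Mauldin--Urba\'nski framework already entails $\sum_{a\in E}\|\phi_a'\|^{\delta_E}<\infty$ --- so this is a wording slip rather than a gap.
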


The measure $\mu$ appearing in Proposition \ref{propositionexistsconformal} is called the \textsf{conformal measure} of $\Phi_E$, and $\delta_E$ is called the \textsf{conformal dimension} of $\Phi_E$. Recall that the \textsf{bounded distortion property} (cf. \cite[(2.9)]{MauldinUrbanski1}) states that
\[
|\phi_\omega'(x)| \asymp \|\phi_\omega'\| \text{ for all } \omega\in E^* \text{ and } x\in [0,1].
\]
This implies that the measure of a cylinder set $[\omega]$ satisfies
\[
\mu(\omega) \df \mu([\omega]) \asymp \|\phi_\omega'\|^\delta
\]
and that
\begin{equation}
\label{boundeddistortion}
\mu(\omega\tau) \asymp \mu(\omega)\mu(\tau) \;\;\text{ for all $\omega,\tau\in E^*$.}
\end{equation}

\begin{convention}
For shorthand we write $\mu(A) = \sum_{\omega\in A} \mu(\omega)$ for all $A \subset E^*$, and $\mu(A) = \mu(\pi(A))$ for all $A \subset E^\N$.
\end{convention}

The aim of this paper is to study the Hausdorff and packing measures of the measure $\mu_P$, where $P\subset\N$ is the set of primes. To define these quantities, let $\psi:(0,\infty) \to (0,\infty)$ be a \textsf{dimension function}, i.e. a continuous increasing function such that $\lim_{r\to 0} \psi(r) = 0$. Then the \textsf{$\psi$-dimensional Hausdorff measure} of a set $A\subset\R$ is
\[
\HH^\psi(A) \df \lim_{\epsilon\searrow 0}\inf\left\{\sum_{i = 1}^\infty \psi(\diam(U_i)): \text{$(U_i)_1^\infty$ is a countable cover of $A$ with $\diam(U_i)\leq\epsilon \all i$}\right\}
\]
and the  \textsf{$\psi$-dimensional packing measure} of $A$ is defined by the formulas
\[
\w\PP^\psi(A) \df \lim_{\epsilon\searrow 0}\sup\left\{\sum_{j = 1}^\infty \psi(\diam(B_j)):
\begin{split}
&\text{$(B_j)_1^\infty$ is a countable disjoint collection of balls}\\
&\text{with centers in $A$ and with $\diam(B_j)\leq\epsilon \all j$}
\end{split}\right\}
\]
\[
\PP^\psi(A) \df \inf\left\{ \sum_{i = 1}^\infty \w\PP^\psi(A_i) : A \subset \bigcup_{i = 1}^\infty A_i\right\}.
\]
A special case is when $\psi(r) = r^s$ for some $s > 0$, in which case the shorthands $\HH^\psi = \HH^s$ and $\PP^\psi = \PP^s$ are used.

If $\mu$ is a locally finite Borel measure on $\R$, then we let
\begin{align*}
\HH^\psi(\mu) &\df \inf\left\{\HH^\psi(A): \mu(\R\butnot A) = 0\right\},\\
\PP^\psi(\mu) &\df \inf\left\{\PP^\psi(A): \mu(\R\butnot A) = 0\right\}.
\end{align*}
This is analogous to the definitions of the (upper) Hausdorff and packing dimensions of $\mu$, see \cite[Proposition 10.3]{Falconer_book3}.
\begin{remark*}
The Hausdorff and packing dimensions of sets \cite[Section 2.1]{Falconer_book3} and the (upper) Hausdorff and packing dimensions of measures \cite[Proposition 10.3]{Falconer_book3} can be defined in terms of $\HH^s$ and $\PP^s$ as follows:
\begin{align*}
\HD(A) &\df \sup\{s\geq 0:\HH^s(A) > 0\},&
\overline\HD(\mu) &\df \sup\{s\geq 0:\HH^s(\mu) > 0\},\\
\PD(A) &\df \sup\{s\geq 0:\PP^s(A) > 0\},&
\overline\PD(\mu) &\df \sup\{s\geq 0:\PP^s(\mu) > 0\}.
\end{align*}
It follows from \cite[Theorems 2.7 and 2.11]{MauldinUrbanski4} and Theorems \ref{theoremHD} and \ref{theoremPD1} above that
\[
\HD(\Lambda_P) = \PD(\Lambda_P) = \overline\HD(\mu_P) = \overline\PD(\mu_P) = \delta_P.
\]
\end{remark*}
For each point $x\in\R$ let
\begin{align*}
\overline D_\mu^\psi(x) &\df \limsup_{r\searrow 0} \frac{\mu(B(x,r))}{\psi(r)},\\
\underline D_\mu^\psi(x) &\df \liminf_{r\searrow 0} \frac{\mu(B(x,r))}{\psi(r)}\cdot
\end{align*}

\begin{theorem}[Rogers--Taylor--Tricot density theorem, {\cite[Theorems 2.1 and 5.4]{TaylorTricot}}, see also \cite{RogersTaylor}]
\label{theoremRTT}
Let $\mu$ be a positive and finite Borel measure on $\R$, and let $\psi$ be a dimension function. Then for every Borel set $A\subset\R$,
\begin{align} \label{rogerstaylor}
\mu(A) \inf_{x\in A}\frac{1}{\overline D_\mu^\psi(x)} \lesssim_\times \HH^\psi(A) &\lesssim_\times \mu(\R) \sup_{x\in A}\frac{1}{\overline D_\mu^\psi(x)}\\ \label{taylortricot}
\mu(A) \inf_{x\in A}\frac{1}{\underline D_\mu^\psi(x)} \lesssim_\times \PP^\psi(A) &\lesssim_\times \mu(\R) \sup_{x\in A}\frac{1}{\underline D_\mu^\psi(x)}\cdot
\end{align}
\end{theorem}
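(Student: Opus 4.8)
The statement packages four inequalities, and my plan is to prove each by first isolating a pointwise density‑comparison principle and then optimizing over $x\in A$, using the elementary identities $\inf_{x\in A}1/\overline D_\mu^\psi(x)=1/\sup_{x\in A}\overline D_\mu^\psi(x)$ and the analogue for $\underline D_\mu^\psi$. Concretely, the two left‑hand inequalities in \eqref{rogerstaylor} and \eqref{taylortricot} reduce to the claims that $\overline D_\mu^\psi\le\lambda$ on $A$ forces $\mu(A)\le\lambda\,\HH^\psi(A)$, and $\underline D_\mu^\psi\le\lambda$ on $A$ forces $\mu(A)\le\lambda\,\PP^\psi(A)$; the two right‑hand inequalities reduce to the claims that $\overline D_\mu^\psi\ge\lambda$ on $A$ forces $\HH^\psi(A)\lesssim_\times\mu(\R)/\lambda$, and $\underline D_\mu^\psi\ge\lambda$ on $A$ forces $\PP^\psi(A)\lesssim_\times\mu(\R)/\lambda$. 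Writing $\lambda=\sup_{x\in A}\overline D_\mu^\psi(x)$ (resp. $\inf$, resp. the $\underline D$‑versions) in each case, the degenerate values $\lambda\in\{0,\infty\}$ make the target inequality trivially $0\le\cdot$ or $\cdot\le\infty$, so I may assume $0<\lambda<\infty$.

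The two lower bounds are soft and will come out with constant $1$. For $\HH^\psi$ I would use the mass distribution principle: since $\overline D_\mu^\psi\le\lambda$ on $A$, for each $\epsilon>0$ the Borel sets $A_n=\{x\in A:\mu(B(x,r))\le(\lambda+\epsilon)\psi(r)\ \forall\,r\le1/n\}$ increase to $A$, and covering $A_n$ by sets $U_i$ of diameter $<1/n$ and enclosing each $U_i$ meeting $A_n$ in a ball about one of its points gives $\mu(A_n)\le(\lambda+\epsilon)\sum_i\psi(\diam U_i)$, whence $\mu(A_n)\le(\lambda+\epsilon)\HH^\psi(A_n)$; letting $n\to\infty$ and $\epsilon\to0$ yields the claim. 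For $\PP^\psi$ I would instead feed a fine (Vitali) cover into the Vitali covering theorem for the Radon measure $\mu$: as $\underline D_\mu^\psi\le\lambda$ on $A$, each $x$ admits arbitrarily small $r$ with $\psi(r)\ge\mu(B(x,r))/(\lambda+\epsilon)$, so these balls form a fine cover and the Vitali theorem extracts a disjoint subfamily $\{B(x_i,r_i)\}$ exhausting $A$ up to a $\mu$‑null set. This subfamily is an admissible packing, and since $\psi$ is increasing, $\sum_i\psi(\diam B_i)\ge\sum_i\psi(r_i)\ge\mu(A)/(\lambda+\epsilon)$; thus $\w\PP^\psi(A)\ge\mu(A)/\lambda$, and intersecting an arbitrary countable cover of $A$ with $A$ and summing gives $\PP^\psi(A)\ge\mu(A)/\lambda$.

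The two upper bounds require genuine covering lemmas and are where the multiplicative constants (hence the $\lesssim_\times$) enter. For $\HH^\psi$ the hypothesis $\overline D_\mu^\psi\ge\lambda$ makes $\{B(x,r):\mu(B(x,r))\ge(\lambda-\epsilon)\psi(r)\}$ a fine cover; the $5r$‑covering lemma (or the Besicovitch covering theorem on the line) produces a disjoint subfamily whose fivefold dilates cover $A$, so $\HH^\psi(A)\le\sum_i\psi(10r_i)\lesssim_\times\sum_i\psi(r_i)\le\mu(\R)/(\lambda-\epsilon)$, the middle step absorbing the dilation factor $\psi(10r)/\psi(r)$ and the last using disjointness. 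For $\PP^\psi$ I would first bound the premeasure by stratifying $A=\bigcup_nA_n$ with $A_n=\{x\in A:\mu(B(x,r))\ge(\lambda-\epsilon)\psi(r)\ \forall\,r\le1/n\}$: any packing of $A_n$ by balls of diameter $\le1/n$ is disjoint with centers in $A_n$, so $\sum_j\psi(\diam B_j)\lesssim_\times\sum_j\psi(r_j)\le\sum_j\mu(B_j)/(\lambda-\epsilon)\le\mu(\R)/(\lambda-\epsilon)$, giving $\w\PP^\psi(A_n)\lesssim_\times\mu(\R)/\lambda$.

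The main obstacle is the passage from this premeasure estimate to a bound on $\PP^\psi(A)$ itself, since $\PP^\psi$ is the Method‑I regularization of $\w\PP^\psi$ and the cover $A=\bigcup_nA_n$ would only yield the divergent estimate $\PP^\psi(A)\le\sum_n\w\PP^\psi(A_n)=\infty$. The resolution I would use is that $\PP^\psi$ is a metric (Borel regular) outer measure and therefore enjoys the increasing‑sets property $\PP^\psi(\bigcup_nA_n)=\lim_n\PP^\psi(A_n)$ for the Borel sets $A_n\uparrow A$; combined with $\PP^\psi(A_n)\le\w\PP^\psi(A_n)$ this gives $\PP^\psi(A)=\lim_n\PP^\psi(A_n)\le\sup_n\w\PP^\psi(A_n)\lesssim_\times\mu(\R)/\lambda$, as required. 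Throughout, the constants hidden in $\lesssim_\times$ depend only on the covering constants of $\R$ and on the comparison $\psi(cr)\asymp\psi(r)$ for bounded $c$ (automatic for the doubling dimension functions used in the applications), while the finiteness of $\mu$ is what guarantees both $\sum_j\mu(B_j)\le\mu(\R)$ and the continuity arguments.
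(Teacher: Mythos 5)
The paper itself gives no proof of Theorem \ref{theoremRTT}: it is quoted wholesale from Rogers--Taylor and Taylor--Tricot, so the only meaningful comparison is with those classical arguments. Measured against them, your proposal is essentially the standard proof and is sound. The reduction to four pointwise claims is the right skeleton; the mass distribution principle gives the left half of \eqref{rogerstaylor} with constant $1$; the Vitali covering theorem for Radon measures gives the left half of \eqref{taylortricot} (here the radius/diameter discrepancy is harmless precisely because $\psi$ is increasing, as you note); the Besicovitch/$5r$-covering argument gives the right half of \eqref{rogerstaylor}; and you correctly isolate the one genuinely delicate step in the right half of \eqref{taylortricot}: the Method-I sum $\sum_n \w\PP^\psi(A_n)$ over the stratification is useless, and must be replaced by $\PP^\psi(A_n)\le\w\PP^\psi(A_n)$ combined with continuity from below of $\PP^\psi$ along the Borel sets $A_n\uparrow A$, which is legitimate because $\PP^\psi$ is a metric outer measure, hence the Borel $A_n$ are Carath\'eodory measurable. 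One cosmetic slip: not all degenerate cases $\lambda\in\{0,\infty\}$ are ``trivial'' (e.g. $\inf_{x\in A}\overline D_\mu^\psi(x)=\infty$ forces $\HH^\psi(A)=0$, which is a nontrivial assertion); they do, however, follow from your finite-$\lambda$ claims by letting $\lambda\to 0$ or $\lambda\to\infty$, so nothing is lost.

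The one substantive caveat is the doubling issue, which you flag but underplay. Both of your right-hand inequalities absorb a factor $\psi(10r)/\psi(r)$ (resp. $\psi(2r)/\psi(r)$), because the densities $\overline D_\mu^\psi$, $\underline D_\mu^\psi$ are normalized by $\psi$ of the \emph{radius} while $\HH^\psi$ and $\PP^\psi$ are defined through $\psi$ of the \emph{diameter}. The theorem as stated allows an arbitrary dimension function, and for a badly non-doubling $\psi$ your constants blow up; this is not a removable defect of your argument, since with this normalization mismatch the right-hand bounds genuinely degrade (already for $\psi(r)=r^s$ the classical statement carries the doubling factor $2^s$, and the original sources work under matched normalization or a doubling-type hypothesis on the gauge). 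So, strictly, your proof establishes the theorem for doubling $\psi$, with constants depending on the doubling constant. That is exactly the generality in which the paper uses the result --- Theorem \ref{theoremHD} assumes $\psi$ doubling, the gauges in Theorems \ref{theoremPD1} and \ref{theoremPD2} are doubling, and Corollary \ref{corollaryRTT} explicitly allows the implied constants to depend on $\psi$ --- so the caveat is harmless here, but a fully literal reading of the statement would require either restricting to doubling $\psi$ or renormalizing the densities by $\psi(2r)$.
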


\begin{corollary}
\label{corollaryRTT}
Let $\mu,\psi$ be as in Theorem \ref{theoremRTT}. Then
\begin{align} \label{rogerstaylor2}
\HH^\psi(\mu) &\asymp_\times \esssup_{x\sim \mu}\frac{1}{\overline D_\mu^\psi(x)}\\ \label{taylortricot2}
\PP^\psi(\mu) &\asymp_\times \esssup_{x\sim \mu}\frac{1}{\underline D_\mu^\psi(x)}\cdot
\end{align}
Here the implied constants may depend on $\mu$ and $\psi$, and $\esssup_{x\sim\mu}$ denotes the essential supremum with $x$ distributed according to $\mu$.
\end{corollary}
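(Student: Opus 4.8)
The plan is to deduce Corollary \ref{corollaryRTT} directly from the Rogers--Taylor--Tricot density theorem (Theorem \ref{theoremRTT}) by applying the two-sided bounds \eqref{rogerstaylor} and \eqref{taylortricot} to a judiciously chosen family of sets and then taking infima over $\mu$-null sets. The key observation is that $\HH^\psi(\mu)$ and $\PP^\psi(\mu)$ are defined as infima over sets $A$ of full $\mu$-measure, so I want to understand how the quantities $\sup_{x \in A} 1/\overline D_\mu^\psi(x)$ and $\inf_{x \in A} 1/\overline D_\mu^\psi(x)$ behave as $A$ ranges over such sets; the natural guess is that the relevant infimum converges to the $\mu$-essential supremum of $1/\overline D_\mu^\psi$.

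For the Hausdorff statement \eqref{rogerstaylor2}, the plan is as follows. For the upper bound $\HH^\psi(\mu) \lesssim_\times \esssup_{x\sim\mu} 1/\overline D_\mu^\psi(x)$, I would take $A$ to be the set where $1/\overline D_\mu^\psi(x) \le \esssup_{x\sim\mu} 1/\overline D_\mu^\psi(x)$, which has full $\mu$-measure by definition of the essential supremum; applying the right-hand inequality of \eqref{rogerstaylor} to this $A$ gives $\HH^\psi(A) \lesssim_\times \mu(\R) \sup_{x\in A} 1/\overline D_\mu^\psi(x) \le \mu(\R)\esssup_{x\sim\mu} 1/\overline D_\mu^\psi(x)$, and since $\HH^\psi(\mu) \le \HH^\psi(A)$, the bound follows (with the implied constant absorbing the finite factor $\mu(\R)$). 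For the lower bound, I would take an \emph{arbitrary} set $A$ with $\mu(\R\butnot A) = 0$ and apply the left-hand inequality of \eqref{rogerstaylor}: since $\mu(A) = \mu(\R) > 0$, we get $\HH^\psi(A) \gtrsim_\times \mu(A) \inf_{x\in A} 1/\overline D_\mu^\psi(x) = \mu(\R)\inf_{x\in A} 1/\overline D_\mu^\psi(x)$. The point is that removing a $\mu$-null set cannot decrease the infimum below the essential supremum: because $A$ omits only a null set, $\inf_{x\in A} 1/\overline D_\mu^\psi(x) \ge \essinf$ is \emph{not} quite what we want---rather I should take the infimum over admissible $A$ on the left and observe that $\sup_A \inf_{x \in A} 1/\overline D_\mu^\psi(x) = \esssup_{x\sim\mu} 1/\overline D_\mu^\psi(x)$, so choosing $A$ to concentrate near where $1/\overline D_\mu^\psi$ is large recovers the essential supremum in the limit.

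The packing statement \eqref{taylortricot2} is proved identically, using \eqref{taylortricot} in place of \eqref{rogerstaylor} and $\underline D_\mu^\psi$ in place of $\overline D_\mu^\psi$ throughout. The only structural difference is cosmetic: the roles of the two density functions are parallel, so no new idea is needed.

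The main obstacle, and the step requiring the most care, is the lower bound: reconciling the set-theoretic $\inf_{x\in A}$ appearing in Theorem \ref{theoremRTT} with the measure-theoretic $\esssup_{x\sim\mu}$ in the statement. The cleanest way to handle this is to note that $\HH^\psi(\mu) = \inf_A \HH^\psi(A)$ over full-measure $A$, and that the supremum over such $A$ of $\inf_{x\in A} 1/\overline D_\mu^\psi(x)$ equals precisely the essential supremum of $1/\overline D_\mu^\psi$ (one discards, from the complement, the null set where the density is anomalously large). Thus applying \eqref{rogerstaylor} to the optimizing sequence of sets $A$ and passing to the limit yields $\HH^\psi(\mu) \gtrsim_\times \esssup_{x\sim\mu} 1/\overline D_\mu^\psi(x)$, matching the upper bound up to the implied constant (which may now legitimately depend on $\mu$ and $\psi$, as the statement permits). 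Combined with the upper bound, this gives the claimed $\asymp_\times$, and the argument for \eqref{taylortricot2} follows verbatim.
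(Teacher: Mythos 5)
Your upper bound is exactly the paper's argument and is fine. The lower bound, however, contains a genuine gap. You propose to apply the left half of \eqref{rogerstaylor} to an arbitrary full-measure set $A$ and then invoke the identity $\sup_A \inf_{x\in A} \tfrac{1}{\overline D_\mu^\psi(x)} = \esssup_{x\sim\mu}\tfrac{1}{\overline D_\mu^\psi(x)}$, where the supremum runs over sets $A$ of full $\mu$-measure. That identity is false: any set $A$ of full $\mu$-measure must contain $\mu$-almost every point, in particular points where $\tfrac{1}{\overline D_\mu^\psi}$ is arbitrarily close to its essential \emph{infimum}, so $\inf_{x\in A}\tfrac{1}{\overline D_\mu^\psi(x)} \leq \essinf_{x\sim\mu}\tfrac{1}{\overline D_\mu^\psi(x)}$ for every admissible $A$, and the supremum over such $A$ is the essential infimum, not the essential supremum. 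The idea of ``choosing $A$ to concentrate near where $\tfrac{1}{\overline D_\mu^\psi}$ is large'' is incompatible with the constraint $\mu(\R\butnot A)=0$. There is also a logical issue independent of this: since $\HH^\psi(\mu)$ is an \emph{infimum} over full-measure sets, a lower bound must hold for \emph{every} such set; you cannot establish it by exhibiting an optimizing sequence of favorable sets.

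The missing idea, which is how the paper proceeds, is to pass to a \emph{positive-measure} (not full-measure) subset. Given an arbitrary full-measure set $B$ and any $t < \esssup_{y\sim\mu}\tfrac{1}{\overline D_\mu^\psi(y)}$, set $A = B\cap\{x : \tfrac{1}{\overline D_\mu^\psi(x)}\geq t\}$; then $\mu(A)>0$ by definition of the essential supremum, and the left half of \eqref{rogerstaylor} applied to $A$ gives $\HH^\psi(A)\gtrsim_\times \mu(A)\,t$. Monotonicity $\HH^\psi(B)\geq\HH^\psi(A)$ then yields a lower bound valid for every full-measure $B$, and taking the infimum over $B$ and the supremum over $t$ gives the $\gtrsim_\times$ direction of \eqref{rogerstaylor2} (the factor $\mu(A)$ being absorbed into the implied constant, which is permitted to depend on $\mu$ and $\psi$). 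The same repair applies verbatim to \eqref{taylortricot2}.
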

\begin{proof}
We prove \eqref{rogerstaylor2}; \eqref{taylortricot2} is similar. For the $\lesssim$ direction, take
\[
A = \left\{x : \tfrac{1}{\overline D_\mu^\psi(x)} \leq \esssup_{y\sim \mu}\tfrac{1}{\overline D_\mu^\psi(y)}\right\}
\]
in the right half of \eqref{rogerstaylor}. $A$ has full $\mu$-measure, so $\HH^\psi(\mu) \leq \HH^\psi(A)$. For the $\gtrsim$ direction, let $B$ be a set of full $\mu$-measure, fix $t < \esssup_{y\sim \mu}\tfrac{1}{\overline D_\mu^\psi(y)}$, and let
\[
A = B\cap \left\{x : \tfrac{1}{\overline D_\mu^\psi(x)} \geq t \right\}.
\]
Then $\mu(A) > 0$. Applying the left half of \eqref{rogerstaylor}, using $\HH^\psi(A) \leq \HH^\psi(B)$, and then taking the infimum over all $B$ and supremum over $t$ yields the $\gtrsim$ direction of \eqref{rogerstaylor2}.
\end{proof}

\begin{remark*}
For a doubling dimension function $\psi$ and a conformal measure $\mu = \mu_E$, the $\esssup$ in \eqref{rogerstaylor2}-\eqref{taylortricot2} can be replacd by $\essinf$ due to the ergodicity of the shift map $\sigma$ with respect to $\mu$ \cite[Theorem 3.8]{MauldinUrbanski1}. Indeed, a routine calculation shows that $\overline D_\mu^\psi(x) \asymp \overline D_\mu^\psi(\sigma(x))$ for all $x$, whence ergodicity implies that the function $x\mapsto\overline D_\mu^\psi(x)$ is constant $\mu$-a.e., and similarly for $x\mapsto \underline D_\mu^\psi(x)$.
\end{remark*}

{\bf Terminological note.} If $\psi$ is a dimension function such that $\HH^\psi(A)$ (resp. $\HH^\psi(\mu)$) is positive and finite, then $\psi$ is called an \textsf{exact Hausdorff dimension function} for $A$ (resp. $\mu$). Similar terminology applies to packing dimension.


\section{Results for regular Gauss IFSes}

In this section we consider a regular Gauss IFS $\Phi_E$ and state some results concerning $\HH^\psi(\mu_E)$ and $\PP^\psi(\mu_E)$, given appropriate assumptions on $E$ and $\psi$. Throughout the section we will make use of the following assumptions, all of which hold for the prime Gauss IFS $\Phi_P$:

\begin{assumption}
\label{assumptionPNT}
The set $E \subset \N$ satisfies an asymptotic law
\begin{equation}
\label{EN2N}
\#(E\cap [N,2N]) \asymp f(N),
\end{equation}
where $f$ is regularly varying with exponent $s\in (\delta,2\delta)$.\Footnote{A function $f$ is said to be \textsf{regularly varying with exponent $s$} if for all $a > 1$, we have $\lim_{x\to\infty} \frac{f(ax)}{f(x)} = a^s$.} For example, if $E$ is the set of primes, then by the prime number theorem $f(N) = N/\log(N)$ satisfies \eqref{EN2N}, and $f$ is regularly varying with exponent $s = 1 \in (\delta,2\delta)$, since $1/2 < \delta_P < 1$.
\end{assumption}

\begin{assumption}
\label{assumptionC}
There exists $\lambda > 1$ such that for all $0 < r \leq 1$,
\[
\mu(\{a\in E : \lambda^{-1} r < \|\phi_a'\| \leq r\}) \asymp \mu(\{a\in E : \|\phi_a'\| \leq r\}).
\]
For example, if $E$ is the set of primes, then this assumption follows from the prime number theorem via a routine calculation showing that both sides are $\asymp \frac{r^\delta}{\log(1/r)}$.
\end{assumption}

\begin{assumption}
\label{assumptionlyapunov}
The Lyapunov exponent $-\sum_{a\in E} \mu(a) \log\|\phi_a'\|$ is finite. Note that this is satisfied when $E$ is the set of primes, since $\mu(a) \log\|\phi_a'\| \asymp a^{-2\delta} \log(a)$ and $\delta > 1/2$.
\end{assumption}

For each $k\in\N$, let
\begin{equation}
\label{Wkdef}
W_k \df \{\omega\in E^* : \lambda^{-(k + 1)} < \|\phi_\omega'\| \leq \lambda^{-k}
\}.
\end{equation}
Note that although the sets $([\omega])_{\omega\in W_k}$ are not necessarily disjoint, there is a uniform bound (depending on $\lambda$) on the multiplicity of the collection, i.e. there exists a constant $C$ independent of $k$ such that $\sup_x\{\#\{\omega\in W_k : x\in [\omega]\}\} \leq C$.

\begin{lemma}
\label{lemmamuJk}
Assume that Assumption \ref{assumptionC} holds. Let $\JJ = (J_k)_1^\infty$ be a sequence of subsets of $E$, and let
\begin{align*}
\Sigma_\JJ &\df \sum_{k = 1}^\infty \mu(J_k)\\
S_\JJ &\df \{\omega \in E^\N : \text{ there exist infinitely many $(n,k)$ such that } \omega\given n \in W_k, \; \omega_{n + 1} \in J_k\}.
\end{align*}
Then $\mu(S_\JJ) > 0$ if $\Sigma_\JJ = \infty$, and $\mu(S_\JJ)=0$ otherwise.
\end{lemma}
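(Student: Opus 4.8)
The plan is to read $S_\JJ$ as a $\limsup$ set and to apply the two halves of the Borel--Cantelli lemma, the divergence half in its quasi-independent (Kochen--Stone) form. First I would pass from the doubly-indexed events $A_{n,k}\df\{\omega\in E^\N : \omega\given n\in W_k,\ \omega_{n+1}\in J_k\}$ to the single-indexed events $B_k\df\bigcup_n A_{n,k}$. Because the cylinders $([\omega])_{\omega\in W_k}$ have multiplicity bounded uniformly in $k$, every $\omega$ has only boundedly many prefixes in $W_k$; hence for each fixed $k$ only finitely many $A_{n,k}$ contain a given point, $\mu(B_k)\asymp\sum_n\mu(A_{n,k})$, and $S_\JJ=\limsup_k B_k$. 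Factoring the cylinder measure of $(\omega\given n)\,\omega_{n+1}$ via the bounded distortion estimate \eqref{boundeddistortion} gives $\sum_n\mu(A_{n,k})\asymp\mu(J_k)\,\mu(W_k)$, and a renewal-type argument using Assumption \ref{assumptionC} to control the overshoot of $\|\phi_\omega'\|$ as it first drops below $\lambda^{-k}$ shows $\mu(W_k)\asymp 1$ uniformly in $k$. Thus $\mu(B_k)\asymp\mu(J_k)$ and $\sum_k\mu(B_k)\asymp\Sigma_\JJ$.

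The convergence case is then immediate: if $\Sigma_\JJ<\infty$ then $\sum_{(n,k)}\mu(A_{n,k})\asymp\Sigma_\JJ<\infty$, so the first Borel--Cantelli lemma gives $\mu(S_\JJ)=0$. For the divergence case $\Sigma_\JJ=\infty$ I would instead apply the Kochen--Stone lemma to $(B_k)$; since $\sum_k\mu(B_k)=\infty$, it suffices to establish the quasi-independence bound $\mu(B_k\cap B_l)\lesssim\mu(B_k)\,\mu(B_l)$ with a constant uniform over well-separated pairs $k<l$ (say $l-k\geq L_0$ for a fixed threshold $L_0$). Decomposing $B_k\cap B_l=\bigcup_{\tau\in W_k,\ a\in J_k}\bigl(B_l\cap[\tau a]\bigr)$ and using conformality to pull back by $\phi_{\tau a}$, on $[\tau a]$ the normalized measure is $\asymp\mu$ and $B_l$ becomes an event of the same type at the shifted level $j\asymp l-\log_\lambda\|\phi_{\tau a}'\|^{-1}$; by the uniform estimate $\mu(W_j)\asymp1$ this again has measure $\asymp\mu(J_l)$, so that summing over $\tau,a$ returns $\mu(B_k\cap B_l)\asymp\mu(J_l)\,\mu(B_k)$. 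Kochen--Stone then yields $\mu(S_\JJ)=\mu(\limsup_k B_k)>0$; the pairs with $0<l-k<L_0$ that this argument does not cover number at most $L_0$ per $k$ and hence contribute at most $\lesssim\sum_k\mu(B_k)=o\bigl((\sum_k\mu(B_k))^2\bigr)$ to the Kochen--Stone denominator, harmlessly.

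The main obstacle is the uniformity of this quasi-independence bound. The level shift $\log_\lambda\|\phi_{\tau a}'\|^{-1}\asymp k+2\log_\lambda a$ depends on the intervening symbol $a$ through $\|\phi_a'\|=a^{-2}$, and when $a$ is atypically large, namely $a\gtrsim\lambda^{(l-k)/2}$, the word $\tau a$ already overshoots level $\lambda^{-l}$; then $B_k$ and $B_l$ are forced to occur at essentially the same position and genuinely correlate. Handling this requires splitting the sum over $a\in J_k$ into a typical part, on which the shifted estimate $\mu(W_j)\asymp1$ applies, and a large-$a$ tail whose contribution must be bounded using the regular variation of the symbol distribution (Assumptions \ref{assumptionPNT} and \ref{assumptionC}). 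This tail estimate is where essentially all of the quantitative work of the lemma lies.
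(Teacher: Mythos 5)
Your overall architecture coincides with the paper's: you form the level-$k$ events $B_k=\bigcup_n A_{n,k}$ (the paper's $A_k=\bigcup\{[\omega a]:\omega\in W_k,\ a\in J_k\}$), prove $\mu(B_k)\asymp\mu(J_k)$ by showing $\sum_{\omega\in W_k}\mu(\omega)\asymp 1$ via Assumption \ref{assumptionC} together with bounded multiplicity, dispose of the convergence case by the first Borel--Cantelli lemma, and aim for quasi-independence plus the divergence half of Borel--Cantelli. All of that matches the paper.

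The problem is that you never actually establish the quasi-independence bound: you defer it to a ``tail estimate'' over large intervening symbols $a$, describe that estimate as containing essentially all of the quantitative work, and do not carry it out. Moreover, the difficulty you locate there is illusory. If $\omega\in W_k$, $a\in J_k$, $k<\ell$, and $\|\phi_{\omega a}'\|\le\lambda^{-(\ell+1)}$ (your ``overshoot'' regime), then $[\omega a]\cap A_\ell=\emptyset$: for $\xi\in[\omega a]$, every prefix of length at most $|\omega|$ has derivative norm greater than $\lambda^{-(k+1)}\ge\lambda^{-\ell}$, while $\omega a$ and all of its extensions have norm at most $\lambda^{-(\ell+1)}$, so no prefix of $\xi$ lies in $W_\ell$. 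The allegedly correlated terms therefore contribute zero; there is no tail to control and no need for Assumption \ref{assumptionPNT} (which the lemma does not even assume). For the remaining terms one writes, for $k<\ell$,
\[
\mu(A_k\cap A_\ell)\lesssim\sum_{\omega\in W_k}\sum_{a\in J_k}\sum_{\substack{\tau\in E^*\\ \omega a\tau\in W_\ell}}\sum_{b\in J_\ell}\mu(\omega a\tau b)
\asymp\sum_{\omega\in W_k}\mu(\omega)\sum_{a\in J_k}\mu(a)\,\mu(J_\ell)\sum_{\substack{\tau\in E^*\\ \omega a\tau\in W_\ell}}\mu(\tau),
\]
where the decomposition is exhaustive because the $W_\ell$-prefix of any point of $A_k\cap A_\ell$ must be strictly longer than its $W_k$-prefix, and the innermost sum is $\lesssim 1$ simply because the condition $\omega a\tau\in W_\ell$ confines $\|\phi_\tau'\|$ to an interval of bounded ratio, so the cylinders $[\tau]$ have bounded multiplicity. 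This yields $\mu(A_k\cap A_\ell)\lesssim\mu(J_k)\mu(J_\ell)$ for all $k\ne\ell$, with no splitting into typical and atypical $a$ and no restriction to well-separated pairs; your Kochen--Stone scaffolding for nearby pairs is then correct but unnecessary.
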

\begin{proof}
For each $k\in \N$, let
\[
A_k = \bigcup\Big\{[\omega a] : \omega\in W_k,\; a\in J_k\Big\}.
\]
We claim that
\begin{itemize}
\item[1.] $\mu(A_k) \asymp \mu(J_k)$ and that
\item[2.] the sequence $(A_k)_1^\infty$ is quasi-independent, meaning that $\mu(A_k \cap A_\ell) \lesssim \mu(A_k) \mu(A_\ell)$ whenever $k\neq \ell$.
\end{itemize}
\begin{subproof}[Proof of 1]
Since the collection $([\omega])_{\omega\in W_k}$ has bounded multiplicity, we have
\[
\mu(A_k) \asymp \sum_{\omega\in W_k} \sum_{a\in J_k} \mu(\omega a) \asymp \sum_{\omega\in W_k} \mu(\omega)\mu(J_k)
\]
and
\begin{align*}
\sum_{\omega\in W_k} \mu(\omega)
&\asymp \sum_{\substack{\omega \in W_k \\ \|\phi_{\omega\given|\omega| - 1}'\| > \lambda^{-k}}} \mu(\omega)
\since{$([\omega])_{\omega\in W_k}$ has bounded multiplicity}\\
&\asymp \sum_{\substack{\omega\in E^* \\ \|\phi_\omega'\| > \lambda^{-k}}} \sum_{\substack{a\in E \\ \omega a \in W_k}} \mu(\omega) \mu(a)\\
&\asymp \sum_{\substack{\omega\in E^* \\ \|\phi_\omega'\| > \lambda^{-k}}} \mu(\omega)  \sum_{\substack{a\in E \\ \|\phi_{\omega a}'\| \leq \lambda^{-k}}} \mu(a) \by{Assumption \ref{assumptionC}}\\\
&\asymp \sum_{\substack{\omega\in E^* \\ \|\phi_\omega'\| > \lambda^{-k}}} \sum_{\substack{a\in E \\ \|\phi_{\omega a}'\|\leq \lambda^{-k}}} \mu(\omega a) = \mu([0,1]) = 1.
\end{align*}
\end{subproof}
\begin{subproof}[Proof of 2]
Let $k < \ell$. Then
\begin{align*}
\mu(A_k\cap A_\ell) &= \sum_{\omega\in W_k} \sum_{a\in J_k} \sum_{\substack{\tau\in E^* \\ \omega a \tau \in W_\ell}} \sum_{b\in J_\ell} \mu(\omega a \tau b)\\
&\asymp \sum_{\omega\in W_k} \mu(\omega) \sum_{a\in J_k} \mu(a) \sum_{\substack{\tau\in E^* \\ \omega a \tau \in W_\ell}} \mu(\tau) \mu(J_\ell)\\
&\lesssim \mu(J_k) \mu(J_\ell) \asymp \mu(A_k) \mu(A_\ell),
\end{align*}
where the $\lesssim$ in the last line is because the collection $\{[\tau] : \tau\in E^*, \omega a \tau \in W_\ell\}$ has bounded multiplicity.
\end{subproof}
Applying the Borel--Cantelli lemma (see e.g. \cite{BeresnevichVelani7}) completes the proof.
\end{proof}

\begin{theorem}[Global measure formula for Gauss IFSes]
\label{theoremGMF}
Let $\Phi_E$ be a regular Gauss IFS. Then for all $x = \pi(\omega) \in \Lambda_E$ and $r > 0$, there exists $n$ such that
\begin{equation}
\label{GMFextra}
[\omega\given n + 1] \subset B(x,Cr)
\end{equation}
and
\begin{equation}
\label{GMF}
M(x,n,r) \leq \mu\big(B(x,r)\big) \lesssim M(x,n,Cr)
\end{equation}
where
\[
M(x,n,r) \df \sum_{\substack{a\in E \\ [(\omega\given n) a] \subset B(x,r)}} \mu((\omega\given n) a),
\]
and where $C\geq 1$ is a uniform constant.
\end{theorem}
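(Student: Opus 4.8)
The plan is to choose $n$ as the generation at which the nested cylinders shrinking to $x$ first fit inside a fixed dilate of the ball, and then to analyse $B(x,r)$ one generation deeper. Write $\tau_m \df \omega\given m$, so $x\in[\tau_m]$ for every $m$ and $\diam[\tau_m]\asymp\|\phi_{\tau_m}'\|\to 0$ by bounded distortion. I would fix a large constant $C_0$ (pinned down by the geometry below), let $m_0$ be the least $m$ with $[\tau_m]\subset B(x,C_0 r)$ (which exists since $\diam[\tau_m]\to0$), and set $n\df m_0-1$. Then \eqref{GMFextra} holds for every $C\ge C_0$ by construction; moreover $[\tau_{n+1}]\subset B(x,C_0 r)$ forces $\diam[\tau_{n+1}]\le 2C_0 r$, while $[\tau_n]\not\subset B(x,C_0 r)$ forces $\diam[\tau_n]\ge C_0 r$. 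The lower bound $M(x,n,r)\le\mu(B(x,r))$ is then immediate (and valid for any $n$): the cylinders $\{[\tau_n a]:[\tau_n a]\subset B(x,r)\}$ are pairwise disjoint up to $\mu$-null endpoints, so the sum of their measures equals $\mu$ of their union, which is at most $\mu(B(x,r))$.

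For the upper bound I would reduce to two geometric facts, after which
\[
\mu(B(x,r)) = \sum_{a:\,[\tau_n a]\cap B(x,r)\ne\emptyset}\mu\big([\tau_n a]\cap B(x,r)\big) \le \sum_{a:\,[\tau_n a]\subset B(x,Cr)}\mu(\tau_n a) = M(x,n,Cr).
\]
The first identity uses that $\mu$ is carried by $\Lambda_E=\bigsqcup_a([\tau_n a]\cap\Lambda_E)$ together with (i) $B(x,r)\cap\Lambda_E\subset[\tau_n]$; the last inequality uses (ii) every child of $[\tau_n]$ meeting $B(x,r)$ is already contained in $B(x,Cr)$.

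Fact (i) is the crux, and it rests on the gap structure of $\Lambda_E$. Since $\min E\ge2$ for the primes, $\Lambda_E\subset(0,1/\min E]$, so inside each cylinder $[\tau_m]$ there is a $\Lambda_E$-free interval of length $\asymp\diam[\tau_m]$ adjacent to the endpoint $\phi_{\tau_m}(1)$, while the children of $[\tau_m]$ accumulate only at the opposite endpoint $\phi_{\tau_m}(0)$. Writing $x=\phi_{\tau_n}(t)$ with $t\in(0,1/\min E]$ places $x$ at distance $\asymp\diam[\tau_n]$ from $\phi_{\tau_n}(1)$, so with $r\le\diam[\tau_n]/C_0$ the ball cannot reach that end. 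On the other side, the same observation applied to the sibling just past $\phi_{\tau_n}(0)$ (or to the parent gap when $\omega_n=\min E$) produces a $\Lambda_E$-free interval of length $\gtrsim\diam[\tau_n]\ge C_0 r$ immediately outside $[\tau_n]$, which exceeds $r$ once $C_0$ is large enough; hence no point of $\Lambda_E$ lying outside $[\tau_n]$ is within $r$ of $x$. This is exactly Fact (i), and it is where the real work lies: one must rule out the ball harvesting mass from a neighbouring cylinder of size comparable to $[\tau_n]$, and it is precisely these gaps of width $\asymp\diam[\tau_n]$ that prevent it.

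For Fact (ii) I would use the Gauss ordering of children, $\diam[\tau_n a]\asymp\diam[\tau_n]/a^2$, which grows as the base point recedes from $\phi_{\tau_n}(0)$. A child meeting $B(x,r)$ then sits at parameter $\lesssim t + r/\diam[\tau_n]$, and combining $\diam[\tau_{n+1}]\asymp\diam[\tau_n]\,t^2\le 2C_0 r$ with $\diam[\tau_n]\ge C_0 r$ shows every such child has diameter $\lesssim C_0 r$; being also within $r$ of $x$, it lies in $B(x,Cr)$ for a suitable uniform $C$ (a fixed multiple of $C_0$), giving (ii). The main obstacle throughout is Fact (i) — the uniform geometric control that the ball never escapes $[\tau_n]$ into massive neighbours — whereas the choice of $n$, the disjointness in the lower bound, and the diameter count in (ii) are comparatively routine once the constant $C_0$ is calibrated against the distortion constants and $\min E\ge2$.
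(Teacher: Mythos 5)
Your choice of $n$, the disjointness argument for the lower bound, and the diameter count in Fact (ii) are all sound and broadly parallel to what the paper does. The divergence --- and the problem --- is Fact (i), which you correctly identify as the crux. The theorem is stated for an \emph{arbitrary} regular Gauss IFS $\Phi_E$, with no hypothesis $\min E\geq 2$, and Fact (i) is false in that generality. If $1\in E$ and $E$ is infinite (e.g.\ $E=\N$), then $\sup\Lambda_E=1$ and the one-sided gaps you rely on vanish: points of $\Lambda_E$ accumulate at the endpoint $1/\omega_1$ of $[\omega_1]$ from \emph{outside} the cylinder, through the cylinders $[(\omega_1-1)*1*M]$ as $M\to\infty$. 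Concretely, for $E=\N$ take $x=[0;2,N,2,2,\dots]$ with $N$ huge and $r=1/N$: your construction yields $[\tau_n]=[2]$ with $\diam[2]=1/6\gg r$, yet $B(x,r)$ contains $1/2$ and hence the points $[0;1,1,M,\dots]\in\Lambda_\N\setminus[2]$ for all $M\gtrsim N$. This escaping case is exactly what the paper's proof is built to handle: it takes $m$ maximal with $B(x,r)\cap\Lambda_E\subset[\omega\given m]$ (so that at the next level the ball genuinely does leak out), renormalizes to $m=0$, and then covers $B(x,r)\cap\Lambda_E\setminus[\omega_1]$ by the cylinders $[(\omega_1-1)*1*M]$, whose measures are comparable to those of the same-scale cylinders $[\omega_1*M]$ sitting inside $[\omega_1]$. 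That comparison needs no gap structure at all.

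For the prime IFS --- indeed for any $E$ with $\min E\geq 2$, so that $\sup\Lambda_E\leq 1/2$ --- your gap argument does go through and gives a cleaner containment $B(x,r)\cap\Lambda_E\subset[\tau_n]$, which is arguably a nicer route for every application made in this paper. Even there, two details deserve explicit verification: (a) the recursive form of the gap estimate, in particular the subcase $\omega_n=\min E$ with $\omega_{n-1}+1\in E$, where the $\Lambda_E$-free interval beyond $\phi_{\tau_n}(0)$ terminates at the boundary of $[\tau_{n-1}]$, at which $\Lambda_E$ accumulates from outside $[\tau_{n-1}]$ --- one must check that the free interval still has length $\gtrsim\diam[\tau_{n-1}]\geq\diam[\tau_n]\geq C_0r$, which it does; and (b) the degenerate case $m_0=0$ (i.e.\ $r\gtrsim 1$), where one should just take $n=0$ and note $M(x,0,Cr)=1$. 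So: with the added hypothesis $\min E\geq 2$ and those details filled in, your argument is a valid alternative proof; as a proof of the theorem as stated, Fact (i) is a genuine gap.
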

We call this theorem a ``global measure formula'' due to its similarity to other global measure formulas found in the literature, e.g. \cite[Section 7]{Sullivan_entropy}, \cite[Theorem 2]{StratmannVelani}.
\begin{proof}
Note that the first inequality $M(x,n,r) \leq \mu(B(x,r))$ follows trivially from applying $\mu$ to both sides of the inclusion $\bigcup\{[\tau a] \subset B(x,r) : a\in E\} \subset B(x,r)$.

Given $x = \pi(\omega) \in \Lambda_E$ and $r > 0$, let $m\geq 0$ be maximal such that $B(x,r)\cap \Lambda_E \subset [\omega\given m]$. By applying the inverse transformation $\phi_{\omega\given m}^{-1}$ to the setup and using the bounded distortion property we may without loss of generality assume that $m = 0$, or equivalently that  $B(x,r)$ intersects at least two top-level cylinders. We now divide into two cases:
\begin{itemize}
\item If $[\omega_1] \subset B(x,r)$, then we claim that
\[
B(x,r)\cap \Lambda_E \subset \bigcup_{\substack{a\in E \\ [a] \subset B(x,Cr)}} [a]
\]
which guarantees \eqref{GMF} with $n=0$. Indeed, if $y = \pi(\tau) \in B(x,r)\cap \Lambda_E$, then $\frac1{\tau_1+1} \leq \pi(\tau) \leq \pi(\omega) + r \leq \frac1{\omega_1} + r$ and thus
\begin{align*}
\diam([\tau_1]) &\asymp \frac1{\tau_1^2} \lesssim \max(\frac1{\omega_1^2},r^2)\\
&\asymp \diam([\omega_1]) + r^2 \leq 2r + r^2 \lesssim r.
\end{align*}

\item If $[\omega_1]$ is not contained in $B(x,r)$, then one of the endpoints of $[\omega_1]$, namely $1/\omega_1$ or $1/(\omega_1+1)$, is contained in $B(x,r)$, but not both. Suppose that $1/\omega_1 \in B(x,r)$; the other case is similar. Now for all $N\in E$ such that $[(\omega_1 - 1)*1*N]\cap B(x,r)\neq \emptyset$ (where $*$ denotes concatenation), we have
\[
r\geq \dist(1/\omega_1,[(\omega_1-1)*1*N]) \asymp 1/(\omega_1^2 N) \asymp \dist(1/\omega_1,\min([\omega_1*N]))
\]
and thus $[\omega_1*N] \subset B(1/\omega_1,Cr) \subset B(x,(C+1)r)$ for an appropriately large constant $C$. Applying $\mu$ and summing over all such $N$ gives
\begin{align*}
\mu(B(x,r)) &\leq \mu([\omega_1]\cap B(x,r)) + \sum_{\substack{N\in E \\ [(\omega_1 - 1)*1*N]\cap B(x,r)\neq \smallemptyset}} \mu([(\omega_1 - 1)*1*N])\\
&\lesssim \sum_{\substack{N\in E \\ [\omega_1*N] \subset B(x,(C+1)r)}} \mu([\omega_1*N])
\end{align*}
which implies \eqref{GMF} with $n=1$. On the other hand, since
\[
r\geq \dist(x,1/\omega_1) \asymp 1/(\omega_1^2 \omega_2) \geq 1/(\omega_1^2 \omega_2^2) \asymp \diam([\omega\given 2]),
\]
we have $[\omega\given 2]\subset B(x,C r)$ as long as $C$ is sufficiently large.
\end{itemize}
\end{proof}

%
%

Fix $\origepsilon\in\{\pm1\}$,\Footnote{Loosely speaking, $\origepsilon=1$ when we are trying to prove results about Hausdorff measure, and $\origepsilon=-1$ when we are trying to prove results about packing measure.} a real number $\alpha > 0$, and a doubling dimension function $\psi(r) = r^\delta \Psi(r)$. We will assume that $\Psi$ is \textsf{$\origepsilon$-monotonic}, meaning that $\Psi$ is decreasing if $\origepsilon=1$ and increasing if $\origepsilon=-1$. Fix $\alpha > 0$, and for each $k\in\N$ let
\[
J_{k,\alpha,\origepsilon} \df \left\{a\in E : \exists r \in [\|\phi_a'\|,1] \;\; \text{with}\;\; r^{-\delta} \mu\big(B([a],r)\big) \lesseqgtr \alpha \Psi(\lambda^{-k} r) \right\}.
\]
Here $\lesseqgtr$ denotes $\geq$ if $\origepsilon=1$ and $\leq$ if $\origepsilon=-1$, and $\lambda > 1$ is as in Assumption \ref{assumptionC}. Write \
\[
S_{\alpha,\origepsilon} \df S_{\JJ_{\alpha,\origepsilon}} ~\text{for}~ \JJ_{\alpha,\origepsilon} \df \big(J_{k,\alpha,\origepsilon}\big)_{k=1}^\infty,
\]
as defined in Lemma \ref{lemmamuJk}. Note that $S_{\alpha,1}$ grows smaller as $\alpha$ grows larger, while $S_{\alpha,-1}$ grows larger as $\alpha$ grows larger.

\begin{proposition}
\label{propositionstars}
For all $\omega\in E^\N$,
\begin{align*}
\sup\{\alpha : \omega\in S_{\alpha,1}\} &\asymp \overline D_\mu^\psi(\pi(\omega))\\
\inf\{\alpha : \omega\in S_{\alpha,-1}\} &\asymp \underline D_\mu^\psi(\pi(\omega))\cdot
\end{align*}
\end{proposition}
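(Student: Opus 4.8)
The plan is to set up a dictionary translating the local behaviour of $\mu$ at $x=\pi(\omega)$ into the combinatorial conditions defining $S_{\alpha,\origepsilon}$, using conjugation by the maps $\phi_{\omega\given n}$ together with the global measure formula (Theorem \ref{theoremGMF}). Write $\ell_n=\|\phi_{\omega\given n}'\|$ and $y_n=\pi(\sigma^n\omega)$, so that $x=\phi_{\omega\given n}(y_n)$, $y_n\in[\omega_{n+1}]$, and bounded distortion makes $\phi_{\omega\given n}$, up to a uniform multiplicative error, a homothety of ratio $\ell_n$ carrying $\mu$ to $\ell_n^\delta\mu$ on $[\omega\given n]$. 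Setting $r=\rho/\ell_n$ and choosing $k$ by $\omega\given n\in W_k$ (so $\lambda^{-k}\asymp\ell_n$ and hence $\lambda^{-k}r\asymp\rho$), the powers of $\ell_n$ cancel and the doubling of $\psi$ gives
\[
\frac{\mu(B(x,\rho))}{\psi(\rho)}\;\asymp\;\frac{r^{-\delta}\,\mu\big(B([\omega_{n+1}],r)\big)}{\Psi(\lambda^{-k}r)},
\]
whose right-hand side is exactly the quantity tested in the definition of $J_{k,\alpha,\origepsilon}$. As $r$ ranges over $[\|\phi_{\omega_{n+1}}'\|,1]$ the scale $\rho$ ranges over the block $[\ell_{n+1},\ell_n]$, and since $\ell_n\to 0$ the level $n(\rho)\to\infty$ as $\rho\to 0$; thus the $\limsup$ (resp.\ $\liminf$) defining $\overline D_\mu^\psi(x)$ (resp.\ $\underline D_\mu^\psi(x)$) is detected by the ``infinitely many $(n,k)$'' condition of Lemma \ref{lemmamuJk}. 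This will give both equivalences.

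I would prove the upper-density case $\origepsilon=1$ in two directions. For $\overline D_\mu^\psi(x)\gtrsim\sup\{\alpha:\omega\in S_{\alpha,1}\}$: given $\omega\in S_{\alpha,1}$, for each witnessing pair $(n,k)$ take the radius $r$ from the definition of $J_{k,\alpha,1}$, set $\rho\asymp\ell_n r$, and push the inclusion $B([\omega_{n+1}],r)\cap[0,1]\subseteq\phi_{\omega\given n}^{-1}(B(x,C\rho))$ forward by $\phi_{\omega\given n}$; this yields the lower bound $\mu(B(x,C\rho))\gtrsim\ell_n^\delta\mu(B([\omega_{n+1}],r))\gtrsim\alpha\,\psi(C\rho)$, and needs no global measure formula, since a lower bound only requires the mass already inside $[\omega\given n]$. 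Because only finitely many $n$ lie in each band $W_k$, there are infinitely many distinct $k$, so $\rho\le\lambda^{-k}\to 0$ and $\overline D_\mu^\psi(x)\gtrsim\alpha$. For the reverse inequality I would start from scales $\rho\to 0$ with $\mu(B(x,\rho))>\beta\psi(\rho)$, feed each $\rho$ into Theorem \ref{theoremGMF} to get \emph{its own} level $n$ (so $[\omega\given n+1]\subset B(x,C\rho)$ and $r=\rho/\ell_n$ with $\|\phi_{\omega_{n+1}}'\|\lesssim r\lesssim1$), and use the upper half $\mu(B(x,\rho))\lesssim M(x,n,C\rho)$. Conjugating $M(x,n,C\rho)$ back by $\phi_{\omega\given n}^{-1}$ identifies it, up to bounded distortion and the bounded multiplicity of the cylinders $([\omega])_{\omega\in W_k}$, with $\ell_n^\delta\mu(B([\omega_{n+1}],C'r))$, so the witnessing inequality of $J_{k,c\beta,1}$ holds at radius $C'r$ (capping the radius at $1$ when $C'r>1$, which changes it only by a bounded factor and is absorbed by doubling); since $\rho\to 0$ forces $n\to\infty$ these give infinitely many pairs and $\omega\in S_{c\beta,1}$.

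The packing case $\origepsilon=-1$ is formally dual, with the two halves of \eqref{GMF} and the roles of $\limsup/\liminf$ and $\ge/\le$ interchanged. The direction $\underline D_\mu^\psi(x)\to S$ goes through verbatim as above, using the lower half $M(x,n,\rho)\le\mu(B(x,\rho))$ at the scale's own level (from Theorem \ref{theoremGMF}) to read off $\omega_{n+1}\in J_{k,c\beta,-1}$. The opposite direction $S\to\underline D_\mu^\psi(x)$ is, by contrast, the one place where the argument is genuinely one-sided, and is where I expect the main obstacle. Starting from a witness $(n,k,r)$ with $\mu(B([\omega_{n+1}],r))$ small, I must produce a scale $\rho\asymp\ell_n r$ at which $\mu(B(x,\rho))$ is small; but a lower bound on the captured mass is useless here, and $B(x,\rho)$ may \emph{protrude} out of $[\omega\given n]$ into sibling cylinders whose mass is not seen by $\mu(B([\omega_{n+1}],r))$ and can be as large as $\mu([\omega\given n])$ itself. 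The crux is therefore to show that a witnessing radius can be taken in the non-protruding range $r\lesssim 1/\omega_{n+1}$ (equivalently $\rho\lesssim\dist(x,\partial[\omega\given n])$), so that Theorem \ref{theoremGMF} returns level $n$ rather than a coarser one and the small cylinder measure genuinely certifies a small ball measure. Here the $\origepsilon$-monotonicity of $\Psi$ is essential: it forces the ratio $r^{-\delta}\mu(B([\omega_{n+1}],r))/\Psi(\lambda^{-k}r)$ to be extremized in a favourable range of $r$, while Assumption \ref{assumptionC} supplies the regularity needed to pass between $\mu(B([\omega_{n+1}],\cdot))$ at radii differing by the bounded distortion constant without a doubling hypothesis on $\mu$. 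Checking that these bounded multiplicative errors never reverse the relevant $\lesseqgtr$ is the heart of the proof.
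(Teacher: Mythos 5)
Your overall strategy coincides with the paper's: conjugate by $\phi_{\omega\given n}$, use the global measure formula to pass between $\mu(B(x,\rho))/\psi(\rho)$ and the cylinder-level test quantity $R^{-\delta}\mu(B([b],R))/\Psi(\lambda^{-k}R)$, and read off the $\limsup$/$\liminf$ from the ``infinitely many $(n,k)$'' condition. Three of the four implications are set up correctly, and you are right that $S_{\alpha,-1}\Rightarrow\underline D_\mu^\psi(x)\lesssim\alpha$ is the one-sided direction where an upper bound on the ball measure is needed. But that is exactly the step you do not carry out: you name the protrusion problem, declare it ``the heart of the proof,'' and defer it to an unproven claim that a witnessing radius can always be relocated into the non-protruding range $r\lesssim 1/\omega_{n+1}$. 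That relocation is not available in general: the definition of $J_{k,\alpha,-1}$ permits witnessing radii throughout $[\|\phi_a'\|,1]$, and the downstream analysis (Lemma \ref{lemmasigmaprime}) genuinely uses both the range $r\in[a^{-1},1]$ and the range $r\in[\|\phi_a'\|,a^{-1}/3]$, so one cannot simply discard the large radii. Moreover the tools you invoke to close the gap --- the $\origepsilon$-monotonicity of $\Psi$ and Assumption \ref{assumptionC} --- play no role in the paper's proof of this proposition; they are used only later, in Lemmas \ref{lemmamuJk} and \ref{lemmasigmaprime}.

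What actually closes the gap is the upper half of the global measure formula itself. The inequality $\mu(B(x,r))\lesssim M(x,n,Cr)$ is precisely the statement that any mass of $B(x,r)$ protruding out of $[\omega\given n]$ is dominated by the mass of cylinders $[(\omega\given n)a]$ contained in $B(x,Cr)$ --- that is the entire content of the second case in the proof of Theorem \ref{theoremGMF}, where the overflow into the neighbouring cylinders $[(\omega_1-1)*1*N]$ is compared to the cylinders $[\omega_1*N]$. Since $M(x,n,Cr)$ is a sum over cylinders inside $[\omega\given n]$, it conjugates cleanly to $\lesssim\lambda^{-\delta k}\mu(B([b],R))$ with $R\asymp\lambda^k r\geq\diam([b])$, giving the two-sided sandwich $\Theta_1\lesssim\Xi\lesssim_\beta\Theta_\beta$ at a single scale (with $\beta$ a fixed constant); one then only needs to check that, for each $n$, the radii $r$ whose GMF level is $n$ sweep out $R\asymp\lambda^k r$ over essentially all of $[\|\phi_b'\|,1]$, so that every witnessing triple is hit up to bounded multiplicative factors absorbed by doubling. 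Because $\limsup_{r}\Theta_1=\limsup_r\Theta_\beta$ and $\liminf_r\Theta_1=\liminf_r\Theta_\beta$, this one sandwich delivers all four implications at once; no restriction to non-protruding radii, and no appeal to the monotonicity of $\Psi$, is needed. As written, your proposal stops exactly where the proof has to do its real work.
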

\begin{proof}
Let $x = \pi(\omega)$, fix $r > 0$, and let $C$, $n$, and $\tau = \omega\given n$ be as in the global measure formula. Write $\tau\in W_k$ for some $k$, as in \eqref{Wkdef}. By the global measure formula, we have
\[
\sum_{\substack{a\in E \\ [\tau a] \subset B(x,r)}} \mu(\tau a)
\leq \mu\big(B(x,r)\big) \lesssim \sum_{\substack{a\in E \\ [\tau a] \subset B(x,Cr)}} \mu(\tau a).
\]
Now for each $\beta \geq 1$ let
\[
\Theta_\beta \df \frac{\mu(B(x,\beta r))}{\psi(\beta r)}\cdot
\]
Let $y = \pi(\sigma^n \omega)$, where $\sigma: E^\N \to E^\N$ is the shift map. Then there exist constants $C_2,C_3 > 0$ (independent of $x$, $r$, $n$, and $k$) such that for all $s > 0$,
\[
B(x,C_2 \lambda^{-k} s) \subset \phi_\tau(B(y,s)) \subset B(x,C_3 \lambda^{-k} s).
\]
Taking $s = C_3^{-1}\lambda^k r$ and $s = C_2^{-1} C \lambda^k \beta r$, and using the bounded distortion property and the fact that $\mu(\tau) \asymp \lambda^{-\delta k}$ yields
\begin{align*}
\Theta_1 &\lesssim \frac{1}{\psi(r)} \lambda^{-\delta k} \sum_{\substack{a\in E \\ [a] \subset B(y,C_2^{-1} C \lambda^k r)}} \mu(a)\\
\Theta_\beta &\gtrsim_\beta \frac{1}{\psi(r)} \lambda^{-\delta k} \sum_{\substack{a\in E \\ [a] \subset B(y,C_3^{-1} \lambda^k \beta r)}} \mu(a)
\end{align*}
Write $b = \omega_{n + 1}$, so that $x\in [\tau b] \subset B(x,C r)$ by \eqref{GMFextra} and thus by the bounded distortion property $y \in [b] \subset B(y,C_4 \lambda^k r)$ for sufficiently large $C_4$. Then $R \df 2C_4 \lambda^k r \geq \diam([b])$. Thus
\[
\mu\big(B(y,R)\big) \leq \mu\big(B([b],R)\big) \leq \mu\big(B(y,2R)\big),
\]
so $\Theta_1 \lesssim \Xi \lesssim_\beta \Theta_\beta$ for some $C_5,C_6 > 0$, where
\[
\Xi \df
\frac{1}{\psi(\lambda^{-k} R)} \lambda^{-\delta k} \sum_{\substack{a\in E \\ [a] \subset B([b],R)}} \mu(a) = \frac{1}{\Psi(\lambda^{-k} R)} R^{-\delta} \sum_{\substack{a\in E \\ [a] \subset B([b],R)}} \mu(a).
\]
Applying the global measure formula again yields
\[
\Theta_1 \lesssim \frac{1}{\Psi(\lambda^{-k} R)} R^{-\delta} \mu\big(B([b],R)\big) \lesssim \Theta_\beta
\]
for some $C_7,C_8 > 0$, and thus
\begin{align*}
\Theta_1 \geq C_9 \alpha \;\;\Rightarrow\;\;
R^{-\delta} \mu\big(B([b],R)\big) \geq \alpha \Psi(\lambda^{-k} R)
\;\;\Rightarrow\;\; \Theta_\beta \geq C_{10} \alpha\\
\Theta_\beta \leq C_{11} \alpha \;\;\Rightarrow\;\;
R^{-\delta} \mu\big(B([b],R)\big) \leq \alpha \Psi(\lambda^{-k} R)
\;\;\Rightarrow\;\; \Theta_1 \leq C_{12} \alpha
\end{align*}
for some $C_9,C_{10},C_{11},C_{12} > 0$ and for all $\alpha > 0$. It follows that
\begin{align*}
&\overline D_\mu^\psi(\pi(\omega)) \geq C_{13} \alpha \;\;\Rightarrow\;\;
\omega\in S_{\alpha,1}
\;\;\Rightarrow\;\; \overline D_\mu^\psi(\pi(\omega)) \geq C_{14} \alpha\\
&\underline D_\mu^\psi(\pi(\omega)) \leq C_{15} \alpha \;\;\Rightarrow\;\;
\omega\in S_{\alpha,-1}
\;\;\Rightarrow\;\; \underline D_\mu^\psi(\pi(\omega)) \leq C_{16} \alpha,
\end{align*}
since $\omega\in S_{\alpha,\origepsilon}$ if and only if there exist infinitely many $n,k,R$ such that $\omega\given n \in W_k$, $R\in [\|\phi_{\omega_{n+1}}'\|,1]$, and $R^{-\delta} \mu\big(B([b],R)\big) \lesseqgtr \alpha \Psi(\lambda^{-k} R)$, and $\limsup_{r\searrow 0} \Theta_1 = \limsup_{r\searrow 0} \Theta_\beta = \overline D_\mu^\psi(\pi(\omega))$ and $\liminf_{r\searrow 0} \Theta_1 = \liminf_{r\searrow 0} \Theta_\beta = \underline D_\mu^\psi(\pi(\omega))$. Taking the supremum (resp. infimum) with respect to $\alpha$ completes the proof.
\end{proof}

So to calculate $\HH^\psi(\mu)$ or $\PP^\psi(\mu)$, we need to determine whether the series $\Sigma_{\alpha,\origepsilon} \df \sum_{k = 1}^\infty \mu(J_{k,\alpha,\origepsilon})$ converges or diverges for each $\alpha > 0$:

\begin{lemma}
\label{lemmaETS}
Let $\origepsilon=1$, and suppose that $\Psi$ is $\origepsilon$-monotonic. If $\sum_{k = 1}^\infty \mu(J_{k,\alpha,\origepsilon})$ converges (resp. diverges) for all $\alpha > 0$, then $\HH^\psi(\mu)=\infty$ (resp. $=0$); otherwise $\HH^\psi(\mu)$ is positive and finite. If $\origepsilon=-1$, the analogous statement holds for $\PP^\psi(\mu)$.
\end{lemma}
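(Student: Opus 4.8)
The plan is to read off $\HH^\psi(\mu)$ from the almost-everywhere value of the upper density $\overline D_\mu^\psi$, and then to control that value through the Borel--Cantelli dichotomy of Lemma \ref{lemmamuJk}. First I would invoke Corollary \ref{corollaryRTT} together with the ergodicity remark following it: since $\psi$ is doubling and $\mu$ is conformal, the function $\omega \mapsto \overline D_\mu^\psi(\pi(\omega))$ is $\mu$-a.e.\ equal to a single constant $\overline D_0 \in [0,\infty]$, and $\HH^\psi(\mu) \asymp_\times 1/\overline D_0$. Thus the whole lemma reduces to deciding whether $\overline D_0$ equals $0$, equals $\infty$, or lies in $(0,\infty)$.

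Next I would use Proposition \ref{propositionstars}, which gives $\overline D_\mu^\psi(\pi(\omega)) \asymp \sup\{\alpha : \omega \in S_{\alpha,1}\}$ pointwise with uniform implied constants $a \leq b$; combined with the a.e.\ constancy this yields $\sup\{\alpha : \omega \in S_{\alpha,1}\} \asymp \overline D_0$ for $\mu$-a.e.\ $\omega$. The link to the series is the observation that $\alpha \mapsto J_{k,\alpha,1}$ is nonincreasing, since the defining inequality $r^{-\delta}\mu(B([a],r)) \geq \alpha\Psi(\lambda^{-k}r)$ only becomes harder to satisfy as $\alpha$ grows; hence $\Sigma_{\alpha,1} = \sum_k \mu(J_{k,\alpha,1})$ is nonincreasing in $\alpha$, and $S_{\alpha,1}$ shrinks with $\alpha$. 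Let $\alpha_c \df \sup\{\alpha : \Sigma_{\alpha,1} = \infty\}$ be the critical threshold. By Lemma \ref{lemmamuJk}, $\mu(S_{\alpha,1}) > 0$ for $\alpha < \alpha_c$ and $\mu(S_{\alpha,1}) = 0$ for $\alpha > \alpha_c$.

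I would then translate these two facts into two-sided bounds on $\overline D_0$. When $\mu(S_{\alpha,1}) = 0$, a.e.\ $\omega \notin S_{\alpha,1}$, and since $S_{\alpha',1} \subset S_{\alpha,1}$ for $\alpha' \geq \alpha$ this forces $\sup\{\alpha' : \omega \in S_{\alpha',1}\} \leq \alpha$ a.e., whence $\overline D_0 \leq b\alpha$; letting $\alpha \downarrow \alpha_c$ gives $\overline D_0 \lesssim \alpha_c$. When $\mu(S_{\alpha,1}) > 0$, a positive-measure set of $\omega$ has $\sup\{\alpha' : \omega \in S_{\alpha',1}\} \geq \alpha$, so $\overline D_\mu^\psi \geq a\alpha$ there, and a.e.\ constancy gives $\overline D_0 \geq a\alpha$; letting $\alpha \uparrow \alpha_c$ gives $\overline D_0 \gtrsim \alpha_c$. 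Hence $\overline D_0 \asymp \alpha_c$. The three cases of the lemma now fall out: convergence of $\Sigma_{\alpha,1}$ for all $\alpha$ means $\alpha_c = 0$, so $\overline D_0 = 0$ and $\HH^\psi(\mu) = \infty$; divergence for all $\alpha$ means $\alpha_c = \infty$, so $\overline D_0 = \infty$ and $\HH^\psi(\mu) = 0$; otherwise $0 < \alpha_c < \infty$ and $\HH^\psi(\mu) \in (0,\infty)$.

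For the packing case $\origepsilon = -1$ I would run the identical argument with $\underline D_\mu^\psi$, with $\inf\{\alpha : \omega \in S_{\alpha,-1}\}$ from Proposition \ref{propositionstars}, and with $\PP^\psi(\mu) \asymp_\times 1/\underline D_0$ in place of their Hausdorff counterparts. The only change is that $\alpha \mapsto \Sigma_{\alpha,-1}$ and $\alpha \mapsto S_{\alpha,-1}$ are now nondecreasing, so the roles of $0$ and $\infty$ are interchanged: convergence for all $\alpha$ forces $\underline D_0 = \infty$ and hence $\PP^\psi(\mu) = 0$, while divergence for all $\alpha$ forces $\PP^\psi(\mu) = \infty$. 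I expect the only delicate point to be the bookkeeping of the uniform constants $a,b$ through the limits $\alpha \to \alpha_c$ — specifically, that the implied constants in Proposition \ref{propositionstars} depend on neither $\alpha$ nor $\omega$, which is exactly what is needed to pin down $\overline D_0 \asymp \alpha_c$ at the threshold. No genuinely new estimate is required beyond what is already assembled in Corollary \ref{corollaryRTT}, Proposition \ref{propositionstars}, and Lemma \ref{lemmamuJk}.
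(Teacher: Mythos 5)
Your proposal is correct and follows essentially the same route as the paper: reduce via Corollary \ref{corollaryRTT} and the ergodicity remark to the a.e.\ constant value of $\overline D_\mu^\psi$ (resp.\ $\underline D_\mu^\psi$), translate that constant into the threshold for membership in $S_{\alpha,\origepsilon}$ via Proposition \ref{propositionstars}, and decide membership by the Borel--Cantelli dichotomy of Lemma \ref{lemmamuJk}. Your explicit treatment of the intermediate case via the critical value $\alpha_c$ (and of the sign reversal in the packing case, where convergence for all $\alpha$ gives $\PP^\psi(\mu)=0$ rather than $\infty$) fills in details the paper dispatches with ``proven similarly,'' and does so correctly.
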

\begin{proof}
By Corollary \ref{corollaryRTT} (and the subsequent remark), it suffices to show that $\overline D_\mu^\psi(\pi(\omega)) = 0$ (resp. $=\infty$) for a positive $\mu$-measure set of $\omega$s. By Proposition \ref{propositionstars}, this is equivalent to showing that $\sup\{\alpha : \omega\in S_{\alpha,1}\} = 0$ (resp. $=\infty$), or equivalently that $\omega\notin S_{\alpha,1}$ (resp. $\in S_{\alpha,1}$) for all $\alpha > 0$. For each $\alpha$, to show this for a positive $\mu$-measure set of $\omega$s it suffices to show that $\mu(S_{\alpha,1}) = 0$ (resp. $>0$), which by Lemma \ref{lemmamuJk} is equivalent to showing that $\sum_{k=1}^\infty \mu(J_{k,\alpha,1})$ converges (resp. diverges). The cases $\origepsilon = -1$ and where $\sum_{k=1}^\infty \mu(J_{k,\alpha,\origepsilon})$ converges for some $\alpha$ but diverges for others are proven similarly.
\end{proof}

\begin{lemma}
\label{lemmasigmaprime}
Assume that Assumptions \ref{assumptionPNT}, \ref{assumptionC}, and \ref{assumptionlyapunov} all hold, and that $\Psi$ is $\origepsilon$-monotonic. Then there exists a constant $C \geq 1$ such that for all $\alpha > 0$ and $\origepsilon \in \{\pm 1\}$, we have
\[
\Sigma_{C^{\origepsilon}\alpha,\origepsilon}' \lesssim_{\plus,\times}
\Sigma_{\alpha,\origepsilon} \lesssim_{\plus,\times} \Sigma_{C^{-\origepsilon}\alpha,\origepsilon}'
\]
where
\begin{align*}
\Sigma_{\alpha,-1}' &\df \sum_{a\in E} \mu(a) \max_{1 \leq x\leq a/3} \log\Big(1/\Psi^{-1}\big(\alpha^{-1} x^{-\delta} \#(B(a,x)\cap E)\big)\Big)\\
\Sigma_\alpha'' &\df \sum_{a\in E} \mu(a) \log\Big(1/\Psi^{-1}\big(\alpha^{-1} F(a^{-1})\big)\Big)\\
\Sigma_{\alpha,1}' &\df \Sigma_{\alpha,-1}' + \Sigma_\alpha''.
\end{align*}
Here $F(r) = r^\delta f(r^{-1})$, where $f$ is as in Assumption \ref{assumptionPNT}.
\end{lemma}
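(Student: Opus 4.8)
The plan is to interchange the two summations and reduce the estimate to a pointwise (in $a\in E$) count of admissible scales. Writing $\mu(J_{k,\alpha,\origepsilon})=\sum_{a\in J_{k,\alpha,\origepsilon}}\mu(a)$ and summing over $k$ first gives
\[
\Sigma_{\alpha,\origepsilon}=\sum_{a\in E}\mu(a)\,N_a,\qquad N_a\df\#\{k\geq 1:a\in J_{k,\alpha,\origepsilon}\},
\]
so it suffices to estimate $N_a$, up to the errors permitted by $\lesssim_{\plus,\times}$ and the $\alpha$-shift. Fix $a$ and abbreviate $G(r)\df r^{-\delta}\mu(B([a],r))$. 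Since $\Psi$ is $\origepsilon$-monotonic, the defining inequality of $J_{k,\alpha,\origepsilon}$ can be inverted for fixed $r$: the set of $k$ for which $r$ witnesses membership is an initial segment $\{1,\dots,k(r)\}$ with $k(r)\asymp_\plus\log_\lambda\!\big(r/\Psi^{-1}(\alpha^{-1}G(r))\big)$, so that $N_a\asymp_\plus\big[\max_{r\in[\|\phi_a'\|,1]}\log_\lambda\!\big(r/\Psi^{-1}(\alpha^{-1}G(r))\big)\big]^+$. Any multiplicative constant appearing here lands inside $\Psi^{-1}(\alpha^{-1}\,\cdot\,)$ and is therefore absorbed into a shift $\alpha\mapsto C^{\pm\origepsilon}\alpha$, the sign being dictated by the monotonicity direction of $\Psi^{-1}$.

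The core computation is to evaluate $G(r)$, which behaves in two regimes according to whether $B([a],r)$ is local to $1/a$ or reaches down toward $0$. For $\|\phi_a'\|\lesssim r\lesssim 1/a$, set $x\asymp ra^2\in[1,\lesssim a]$; then $B([a],r)\asymp B(1/a,r)$ meets only the cylinders $[b]$ with $|b-a|\lesssim x$, so the global measure formula (Theorem \ref{theoremGMF}) and bounded distortion give $\mu(B([a],r))\asymp a^{-2\delta}\#(B(a,x)\cap E)$ (using $\mu(b)\asymp b^{-2\delta}\asymp a^{-2\delta}$ for $b\asymp a$), whence $G(r)\asymp x^{-\delta}\#(B(a,x)\cap E)$; this is the regime producing $\Sigma'_{\alpha,-1}$, and the cutoff $x\leq a/3$ merely keeps $b\asymp a$ with controlled constants. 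For $1/a\lesssim r\leq 1$ the ball contains $(0,\asymp r)$, so by Assumption \ref{assumptionPNT} (and $s<2\delta$) one has $\mu(B([a],r))\asymp\sum_{b\in E,\,b>1/r}b^{-2\delta}\asymp r^{2\delta}f(1/r)$, i.e.\ $G(r)\asymp r^\delta f(1/r)=F(r)$.

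Now I split $\max_r=\max(\max_{\mathrm{local}},\max_{\mathrm{global}})$ and use $\max(A,B)\asymp A+B$ for $A,B\geq 0$. In the local regime I reparametrize by $x$; since $\log_\lambda r=\log_\lambda x-2\log_\lambda a$ with $x\leq a/3$, dropping this factor costs at most $O(\log a)$ per $a$, which is absorbed into the additive constant after summing because $\sum_a\mu(a)\log a<\infty$ by the finite Lyapunov exponent (Assumption \ref{assumptionlyapunov}); this yields $\sum_a\mu(a)\max^+_{\mathrm{local}}\asymp_{\plus,\times}\Sigma'_{\alpha,-1}$. The decisive asymmetry is in the global regime, where $G(r)\asymp F(r)$ and $F$ is decreasing in $r$. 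When $\origepsilon=1$, $\Psi^{-1}$ is decreasing, so $\log_\lambda(r/\Psi^{-1}(\alpha^{-1}F(r)))$ is maximized at $r\asymp 1/a$ and equals, after absorbing $-\log_\lambda a$, the summand of $\Sigma''_\alpha$; when $\origepsilon=-1$, $\Psi^{-1}$ is increasing, so both $\log_\lambda r$ and $-\log_\lambda\Psi^{-1}(\alpha^{-1}F(r))$ decrease as $r\downarrow$, forcing the global maximum to $r\asymp 1$, where $F(1)\asymp 1$ makes the contribution independent of $a$ and bounded by the $x\asymp 1$ term of $\Sigma'_{\alpha,-1}$. Assembling the pieces gives $\Sigma_{\alpha,1}\asymp_{\plus,\times}\Sigma'_{\alpha,-1}+\Sigma''_\alpha=\Sigma'_{\alpha,1}$ and $\Sigma_{\alpha,-1}\asymp_{\plus,\times}\Sigma'_{\alpha,-1}$, with the stated $\alpha$-shifts.

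I expect the main obstacle to be the uniform bookkeeping rather than any single estimate: establishing the two asymptotics for $\mu(B([a],r))$ with constants uniform across the transition at $r\asymp 1/a$, and checking that every additive error (of size $\asymp\log a$) and every multiplicative error is absorbed into $\lesssim_{\plus,\times}$ and the shift $C^{\pm\origepsilon}\alpha$ uniformly in $\alpha$ — in particular, controlling the interaction of the positive part $[\,\cdot\,]^+$ with the threshold $k\geq 1$ when the relevant quantities are small.
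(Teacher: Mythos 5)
Your proposal is correct and follows essentially the same route as the paper's proof: interchanging the sums to count admissible $k$ per $a$, converting that count to $\log_\lambda\big(r/\Psi^{-1}(\alpha^{-1}G(r))\big)$ via the $\origepsilon$-monotonicity of $\Psi$, absorbing the $\log_\lambda r$ term using Assumption \ref{assumptionlyapunov}, and splitting into the local regime ($r=a^{-2}x$, $G(r)\asymp x^{-\delta}\#(B(a,x)\cap E)$) and the global regime ($G(r)\asymp F(r)$ via Assumption \ref{assumptionPNT}), with the maximum in the global regime landing at $r\asymp a^{-1}$ or $r\asymp 1$ according to the sign of $\origepsilon$. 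The only cosmetic difference is that the paper handles the gap between $a^{-1}/3$ and $a^{-1}$ by an explicit $3^{\pm\delta}$ shift of $\alpha$ and records the $\origepsilon=-1$ global contribution as a convergent series absorbed into the additive constant, both of which your bookkeeping already accommodates.
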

\begin{proof}
Indeed,
\begin{align*}
\sum_{k = 1}^\infty \mu(J_{k,\alpha,\origepsilon})
&= \sum_{a\in E} \mu(a) \#\big\{k\in\N : \exists r \in [\|\phi_a'\|,1] \;\;\; r^{-\delta} \mu(B([a],r)) \lesseqgtr \alpha \Psi(\lambda^{-k} r)\big\}\\
&= \sum_{a\in E} \mu(a) \max\big\{k\in\N : \exists r \in [\|\phi_a'\|,1] \;\;\; r^{-\delta} \mu(B([a],r)) \lesseqgtr \alpha \Psi(\lambda^{-k} r)\big\}\\
&\;\;\;\;(\text{since $\Psi$ is decreasing if $\origepsilon=1$ and increasing if $\origepsilon=-1$})\\
&\asymp_{\plus,\times} \sum_{a\in E} \mu(a) \max\Big(0,\max_{r \in [\|\phi_a'\|,1]} \log_\lambda\Big(r/\Psi^{-1}\big(\alpha^{-1} r^{-\delta} \mu(B([a],r))\big)\Big)\Big)\\
&\in \left[\sum_{a\in E} \mu(a) \log_\lambda\|\phi_a'\|,C\right] + \sum_{a\in E} \mu(a) \max_{r \in [\|\phi_a'\|,1]} \log_\lambda\Big(1/\Psi^{-1}\big(\alpha^{-1} r^{-\delta} \mu(B([a],r))\big)\Big)\\
&\;\;\;\;(\text{since $r \leq 1 \lesssim 1/\Psi^{-1}\big(\alpha^{-1} r^{-\delta} \mu(B([a],r))\big)$})
\end{align*}
The first term is finite by Assumption \ref{assumptionlyapunov}. The second term can be analyzed by considering
\begin{align*}
\Sigma_{1,\alpha} &\df \sum_{a\in E} \mu(a) \max_{a^{-1} \leq r \leq 1} \log_\lambda\Big(1/\Psi^{-1}\big(\alpha^{-1} r^{-\delta} \mu(B([a],r))\big)\Big)\\
\Sigma_{2,\alpha} &\df \sum_{a\in E} \mu(a) \max_{\|\phi_a'\|\leq r \leq a^{-1}/3} \log_\lambda\Big(1/\Psi^{-1}\big(\alpha^{-1} r^{-\delta} \mu(B([a],r))\big)\Big)
\end{align*}
Then
\[
\Sigma_{1,\alpha} + \Sigma_{2,\alpha} \lesssim_{\plus,\times} \Sigma_{\alpha,\origepsilon} \lesssim_{\plus,\times} \Sigma_{1,3^\delta\alpha} + \Sigma_{2,3^{-\delta}\alpha}.
\]
Now for $r > 0$ sufficiently small we have
\begin{align*}
\mu(B(0,r)) &\geq \sum_{a\geq r^{-1}} \mu(a) \asymp \sum_{a\geq r^{-1}} a^{-2\delta} \\
&\asymp \sum_{k = 0}^\infty \#(E\cap [2^k r^{-1}, 2^{k+1} r^{-1}]) (2^k r^{-1})^{-2\delta} \\
&\asymp \sum_{k = 0}^\infty f(2^k r^{-1}) 2^{-2k\delta} r^{2\delta} \asymp f(r^{-1}) r^{2\delta}
\;\;\;\;\text{ since $f$ is regularly varying with exponent $s<2\delta$}
\end{align*}
and similarly for the reverse direction, giving
\[
\mu(B(0,r)) \asymp r^{2\delta} f(r^{-1}).
\]
When $r\geq a^{-1}$, then $B(0,r)\cap [0,1] \subset B([a],r) \subset B(0,2r)$, so
\[
\mu(B([a],r)) \asymp r^{2\delta} f(r^{-1})
\]
and thus $\Sigma_{1,C^\origepsilon \alpha}' \leq \Sigma_{1,\alpha} \leq \Sigma_{1,C^{-\origepsilon} \alpha}'$, where
\[
\Sigma_{1,\alpha}' \df \sum_{a\in E} \mu(a) \max_{a^{-1} \leq r \leq 1} \log\Big(1/\Psi^{-1}\big(\alpha^{-1} r^\delta f(r^{-1})\big)\Big).
\]
Since $f$ is regularly varying with exponent $s>\delta$, the function $F(r) = r^\delta f(r^{-1})$ is monotonically decreasing for $r$ sufficiently small, while $\Psi^{-1}$ is decreasing (resp. increasing) if $\origepsilon=1$ (resp. $\origepsilon=-1$). It follows that the maximum occurs at $r = a^{-1}$ (resp. $r = 1$), corresponding to
\begin{equation}
\label{Hcomparison}
\Sigma_{1,\alpha}' \asymp_\plus \sum_{a\in E} \mu(a) \log\Big(1/\Psi^{-1}\big(\alpha^{-1} F(a^{-1})\big)\Big) \;\;\;\;\text{ if $\origepsilon = 1$}
\end{equation}
and
\begin{equation}
\label{Hcomparison2}
\Sigma_{1,\alpha}' \asymp_\plus \sum_{a\in E} \mu(a) \text{ const.} < \infty \;\;\;\;\text{ if $\origepsilon = -1$}
\end{equation}
The latter series always converges, whereas the former series may either converge or diverge.

On the other hand, we have
\[
\Sigma_{2,\alpha} \asymp \sum_{a\in E} \mu(a) \max_{a^{-2} \leq r\leq a^{-1}/3} \log\Big(1/\Psi^{-1}\big(\alpha^{-1} r^{-\delta} \mu(B([a],r))\big)\Big).
\]
Using the change of variables $r = a^{-2} x$ and the fact that $\mu(b) \asymp \mu(a) \asymp a^{-2\delta}$ for all $b\in E$ such that $B([a],r)\cap [b]\neq\emptyset$, we get $\Sigma_{2,C^\origepsilon\alpha}' \lesssim \Sigma_{2,\alpha} \lesssim \Sigma_{2,C^{-\origepsilon}\alpha}'$, where
\begin{equation}
\label{otherpart}
\Sigma_{2,\alpha}' \df \sum_{a\in E} \mu(a) \max_{1 \leq x\leq a/3} \log\Big(1/\Psi^{-1}\big(\alpha^{-1} x^{-\delta} \#(B(a,x)\cap E)\big)\Big)
\end{equation}
and $C \geq 1$ is a constant. Combining \eqref{Hcomparison}, \eqref{Hcomparison2}, and \eqref{otherpart} yields the conclusion.
\end{proof}

\section{Proofs of main theorems}
In this section we consider the Gauss IFS $\Phi_P$, where $P$ is the set of primes. We begin with a number-theoretic lemma.

\begin{lemma}
For all $\delta < 1$,
\begin{equation}
\label{hoheisel1}
\#(P\cap B(a,x)) \lesssim (x/a)^\delta f(a) \text{ for } 1 \leq x \leq a/3
\end{equation}
where $f(N) = N/\log(N)$ is as in Assumption \ref{assumptionPNT}.
\end{lemma}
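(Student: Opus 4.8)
The plan is to deduce the bound from the Brun--Titchmarsh theorem, which asserts that the number of primes in any interval of length $L \ge 2$ is at most $\frac{2L}{\log L}$. Since $B(a,x) = (a-x,a+x)$ is an interval of length $2x$, for $x \ge 2$ this yields
\[
\#(P\cap B(a,x)) \le \frac{4x}{\log(2x)} \asymp \frac{x}{\log x},
\]
while for $1 \le x \le 2$ the trivial bound $\#(P\cap B(a,x)) \le 2x + 1$ (primes are integers) suffices. The key observation is that, writing $H(y) \df y^{1-\delta}/\log y$, both the Brun--Titchmarsh bound and the target factor through the \emph{same} function $H$: indeed $\frac{x}{\log x} = x^\delta H(x)$, while the target is $(x/a)^\delta f(a) = x^\delta \cdot \frac{a^{1-\delta}}{\log a} = x^\delta H(a)$. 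Dividing through by $x^\delta$, it therefore suffices to prove the clean monotonicity-type estimate $H(x) \lesssim H(a)$ for $2 \le x \le a/3$.

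Here the hypothesis $\delta < 1$ enters only through $1 - \delta > 0$. A direct computation gives $H'(y) = y^{-\delta}\big[(1-\delta)\log y - 1\big]/\log^2 y$, so $H$ is decreasing on $(1,y_0)$ and increasing on $(y_0,\infty)$, where $y_0 = e^{1/(1-\delta)}$ depends only on $\delta$. Being $U$-shaped, $H$ attains its maximum over the interval $[2,a/3]$ at one of the two endpoints, so I would bound $H(x) \le \max\big(H(2),\, H(a/3)\big)$ and treat the two endpoints separately.

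At the right endpoint, $H(a/3) = 3^{-(1-\delta)} a^{1-\delta}/\log(a/3) \asymp H(a)$ since $\log(a/3) \asymp \log a$, which is exactly the desired order. At the left endpoint, $H(2)$ is a constant depending only on $\delta$; to absorb it I would use that $H$ is itself bounded below, namely $H(a) \ge \min_y H(y) = H(y_0) = e(1-\delta) > 0$, so any $\delta$-dependent constant is $\lesssim H(a)$. The same lower bound disposes of the leftover range $1 \le x \le 2$, where $\#(P\cap B(a,x))$ is bounded by a constant while the target $x^\delta H(a) \ge H(a)$ is bounded below. Combining the cases gives $H(x) \lesssim H(a)$ throughout, and hence the lemma.

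The main (and really the only) obstacle is recognizing that the logarithmic saving in Brun--Titchmarsh is essential and precisely matched to the normalization $f(a) = a/\log a$: the trivial count of integers in $B(a,x)$ would give only $\#(P\cap B(a,x)) \lesssim x$, which is off by a factor of $\log a$ at the upper end $x \asymp a/3$, where the interval genuinely contains $\asymp a$ integers but only $\asymp a/\log a$ primes. Everything else is elementary bookkeeping of the boundary and small-$x$ regimes via the unimodality of $H$.
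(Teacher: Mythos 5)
Your proof is correct, but it runs on a different engine than the paper's. The paper deduces \eqref{hoheisel1} from Hoheisel's theorem \eqref{hoheisel2}, splitting at the threshold $x = a^\theta$: for $a^\theta \le x \le a/3$ it gets $\#(P\cap B(a,x)) \lesssim x/\log(a)$ directly (note the $\log a$, not $\log x$, in the denominator, which makes the comparison with $(x/a)^\delta f(a)$ immediate from $x/a \le (x/a)^\delta$), while for $1 \le x \le a^\theta$ it uses the trivial count $2x+1$ together with $x^{1-\delta} \le a^{\theta(1-\delta)} \lesssim a^{1-\delta}/\log(a)$, which is where $\theta < 1$ enters. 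You instead invoke Brun--Titchmarsh, which bounds primes in an interval of length $2x$ by $\asymp x/\log x$ for all $x \ge 2$ with no lower threshold, and then convert the $\log x$ into a $\log a$ via the unimodality of $H(y) = y^{1-\delta}/\log y$ and the uniform lower bound $H \ge e(1-\delta)$. Both arguments are complete (your endpoint analysis correctly absorbs the constant $H(2)$ and the range $1 \le x \le 2$ into the $\delta$-dependent implied constant, which is permitted). What your route buys is a much lighter input: Brun--Titchmarsh is an elementary sieve bound, whereas Hoheisel rests on zero-density estimates for $\zeta$. The paper's choice is nevertheless economical in context, because the \emph{lower} bound in \eqref{hoheisel2} --- which Brun--Titchmarsh cannot supply --- is needed anyway in the proof of \eqref{Ppsimu0}, so Hoheisel must be imported regardless; for this lemma in isolation, your version is the more elementary one.
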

\begin{proof}
A well-known result of  Hoheisel \cite{Hoheisel} states that there exists $\theta < 1$ such that
\begin{equation}
\label{hoheisel2}
\#(P\cap [a,b]) \asymp \frac{b - a}{\log(a)} \text{ if } a^\theta \leq b - a \leq a.
\end{equation}
(This result has seen numerous improvements, see \cite{Pintz2} for a survey, but it does not matter very much for our purposes, although the lower bound does make a difference in our upper bound for the exact packing dimension. The most recent improvements are $\theta = 6/11 + \epsilon$ for the upper bound \cite{LouYao2} and $\theta = 21/40$ for the lower bound \cite[p.562]{BHP}.)


It follows that
\[
\#(P\cap B(a,x)) \lesssim \frac{x}{\log(a)} \text{ if } a^\theta \leq x \leq a/3.
\]
In this case, since $\delta < 1$ and $x \leq a$ we have
\[
\frac{x}{\log(a)} = \frac{x}{a} f(a) \leq \left(\frac{x}{a}\right)^\delta f(a)
\]
and combining yields \eqref{hoheisel1} in this case. On the other hand, if $1 \leq x \leq a^\theta$, then
\[
\#(P\cap B(a,x)) \leq 2x + 1 \lesssim (x/a)^\delta f(a),
\]
since
\[
x^{1 - \delta} \leq a^{(1 - \delta)\theta} \lesssim a^{1 - \delta}/\log(a),
\]
demonstrating \eqref{hoheisel1} for the second case.
\end{proof}

Thus for appropriate $C \geq 1$,
\begin{align*}
\Sigma_{\alpha,-1}' &\leq \sum_{a\in P} \mu(a) \log\Big(1/\Psi^{-1}\big(C\alpha^{-1} \max_{1 \leq x\leq a/3} x^{-\delta} (x/a)^\delta f(a)\big)\Big)\\
&= \sum_{a\in P} \mu(a) \log\Big(1/\Psi^{-1}\big(C\alpha^{-1} a^{-\delta} f(a)\big)\Big)\\
&= \sum_{a\in P} \mu(a) \log\Big(1/\Psi^{-1}\big(C\alpha^{-1} F(a^{-1})\big)\Big) \asymp_\plus \Sigma_{C^{-1}\alpha}''.
\end{align*}
It follows that $\Sigma_{\alpha,1}' \lesssim \Sigma_{C^{-1}\alpha}''$.

\begin{proof}[Proof of Theorem \ref{theoremHD}]
Set $\origepsilon=1$ and $E = P$. If $\Psi$ is increasing, then $\HH^\psi(\mu) \lesssim \HH^\delta(\mu) = 0$ and the series \eqref{HDseries} diverges, so we may henceforth assume that $\Psi$ is decreasing, allowing us to use Lemma \ref{lemmasigmaprime}. We have\Footnote{In this computation we use the Iverson bracket notation $[\Phi] = \begin{cases}1 & \text{$\Phi$ true}\\ 0 & \text{$\Phi$ false}\end{cases}$.}
\begin{align*}
\Sigma_\alpha'' &\asymp_\plus  \sum_{a \in P} \mu(a) \sum_{k=1}^\infty \left[ k \leq \log_\lambda \left( 1/\Psi^{-1}(\alpha^{-1} F(a^{-1}) \right) \right]\\
&= \sum_{a\in P} \mu(a) \sum_{k=1}^\infty \left[F^{-1}(\alpha\Psi(\lambda^{-k})) \geq a^{-1}\right]\\
&\asymp \sum_{k=1}^\infty \sum_{\substack{a\in P \\ a \geq 1/F^{-1}(\alpha\Psi(\lambda^{-k}))}} a^{-2\delta}\\
&\asymp \sum_{k=1}^\infty \frac{x^{1-2\delta}}{\log x}\given_{x=1/F^{-1}(\alpha\Psi(\lambda^{-k}))}\\
&\asymp \sum_{k=1}^\infty \frac{x^{1-2\delta}}{\log x}\given_{x=(y\log y)^{1/(1-\delta)},\; y=\alpha\Psi(\lambda^{-k})}\footnotemark\\
&\asymp \sum_{k=1}^\infty \frac{(y\log y)^{\frac{1-2\delta}{1-\delta}}}{\log y}\given_{y=\alpha\Psi(\lambda^{-k})}\\
&\asymp \sum_{k=1}^\infty \frac{(y\log y)^{\frac{1-2\delta}{1-\delta}}}{\log y}\given_{y=\Psi(\lambda^{-k})}.
\end{align*}
\footnotetext{We have $f(x) = x/\log x$, thus $F(r) = r^{\delta-1} / \log(r^{-1})$ and $F^{-1}(x) \asymp (x\log x)^{1/(\delta-1)}$.}
If this series converges for all $\alpha > 0$, then so do $\Sigma_{\alpha,1}'$ and (by Lemma \ref{lemmasigmaprime}) $\sum_k \mu(J_{k,\alpha,1})$, and thus by Lemma \ref{lemmaETS} we have $\HH^\psi(\mu)=\infty$. On the other hand, if the series diverges, then so does $\sum_k \mu(J_{k,\alpha,1})$ for all $\alpha > 0$, and thus by Lemma \ref{lemmaETS} we have $\HH^\psi(\mu)=0$.
\end{proof}

\begin{proof}[Proof of Theorem \ref{theoremPD1}]
We can get bounds on the exact packing dimension of $\mu_P$ by using known results about the distribution of primes. First, we state the strongest known lower bound on two-sided gaps:

\begin{theorem}[\cite{Maier2}]
Let $p_n$ denote the $n$th prime and let $d_n = p_{n+1} - p_n$. For all $k$,
\[
\limsup_{n\to\infty} \frac{\min(d_{n + 1},\ldots,d_{n + k})}{\phi(p_n)} > 0
\]
where $\phi$ is as in \eqref{phidef}.
\end{theorem}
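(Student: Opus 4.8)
This is the Erd\H{o}s--Rankin large-gap theorem augmented to chains of gaps, and the natural route is to prove the single-gap case ($k=1$) by the Erd\H{o}s--Rankin covering construction and then upgrade to arbitrary $k$ by Maier's matrix method. For $k=1$ the goal is a prime-free interval of length $\gg\phi(x)$ below $x$. The plan is to fix a sieving bound $z\asymp\log x$, set $Q=\prod_{p\le z}p$ (so $\log Q\sim z\le\log x$ and hence $Q\le x$), and produce a residue $b\pmod Q$ for which every integer of $[b+1,b+y]$ with $y\asymp\phi(x)$ is divisible by some prime $p\le z$ and is therefore composite. By the Chinese remainder theorem this reduces to choosing one residue class modulo each prime $p\le z$ so that the union of these classes covers $\{1,\dots,y\}$. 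Rankin's staged sieve performs this covering efficiently: the small primes delete the bulk of $\{1,\dots,y\}$ cheaply, leaving a sparse set of survivors, and these are then removed one at a time by distinct large primes near $z$, of which the prime number theorem furnishes $\asymp z/\log z$. Balancing the survivor count against this supply is precisely what permits $y\asymp z\cdot\frac{\log z\,\log\log\log z}{(\log\log z)^2}$, which with $z\asymp\log x$ is $\asymp\phi(x)$ and yields the exact shape of the function in \eqref{phidef}.

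To reach general $k$ I would use Maier's matrix method to convert one long prime-free interval into $k$ consecutive large gaps. Arrange the integers in a rectangle whose rows are translates, by multiples of $Q$, of a base interval $[b+1,b+N]$ with $N\asymp(k+1)y$, so that the $(r,n)$-entry is $\equiv b+n\pmod Q$. Partition the columns into $k$ ``gap blocks'' of length $y$ and $k+1$ short ``connector blocks''. Because each entry is $\equiv b+n\pmod Q$, the gap blocks constructed above are prime-free in \emph{every} row. The remaining task is to locate a single row in which each of the $k+1$ connector blocks contains at least one prime; in such a row, consecutive primes straddling the $k$ prime-free gap blocks are separated by gaps $\ge y\gg\phi(x)$, giving $\min(d_{n+1},\dots,d_{n+k})\gg\phi(p_n)$ for the corresponding $n$, and letting $x\to\infty$ produces infinitely many such $n$ and hence $\limsup>0$.

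The crux --- and the main obstacle --- is guaranteeing that a good row exists. Here I would sum the number of primes lying in the connector columns over all rows and compare the total with the prediction of the prime number theorem for the relevant arithmetic progressions modulo $Q$ (an equidistribution input of Bombieri--Vinogradov / Gallagher type, which the matrix averaging is designed to tolerate): since the average row contains the expected positive number of primes in each connector, a pigeonhole/second-moment argument extracts a row in which all $k+1$ connectors are simultaneously nonempty while the gap blocks remain empty. The delicate points are calibrating the connector length so that each connector is nonempty on average yet short enough not to spoil the gap lengths, and ensuring the equidistribution estimate remains valid for the modulus $Q$; the tension between wanting $z$ (hence $Q$) large for a long gap and wanting the modulus small for reliable prime equidistribution is exactly what Maier's matrix method is built to negotiate.
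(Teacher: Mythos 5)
First, a point of orientation: the paper does not prove this statement at all --- it is quoted from Maier \cite{Maier2}, so the only benchmark is Maier's published argument. Your outline correctly identifies its two pillars: the Erd\H{o}s--Rankin covering construction, whose calibration $y\asymp z\cdot\frac{\log z\,\log\log\log z}{(\log\log z)^2}$ with $z\asymp\log x$ does reproduce exactly the function $\phi$ of \eqref{phidef}, and the matrix method for converting one sieved configuration modulo $Q=\prod_{p\le z}p$ into chains of gaps. Two attributions need repair, though. Bombieri--Vinogradov is an average over moduli and gives nothing for the single primorial modulus $Q$; the input Maier actually uses is Gallagher's theorem on primes in progressions to such moduli, together with a separate treatment of a possible exceptional character of conductor dividing $Q$. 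And covering $k$ disjoint gap blocks of total length $\asymp ky$ is not a free modification of Rankin's argument: the final ``cleanup'' stage removes the $O(z/\log z)$ surviving stragglers one at a time using the $\asymp z/\log z$ primes near $z$, and the number of stragglers scales with the total length being sieved, so this step must be re-examined rather than asserted.

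The genuine gap is the step you yourself identify as the crux: producing a single row in which all $k+1$ connectors simultaneously contain a prime. A first moment only controls averages, and the second-moment route fails quantitatively: the off-diagonal terms $\#\{r: b+j+rQ \text{ and } b+j'+rQ \text{ both prime}\}$ are twin-prime-type counts for which only sieve \emph{upper} bounds are available, and these carry an unavoidable constant-factor loss, so Paley--Zygmund yields only that each connector is occupied in some small positive proportion of rows --- far short of the proportion exceeding $1-\frac{1}{k+1}$ that your union bound over the $k+1$ connectors would need. The published proof sidesteps simultaneity entirely: the residues are chosen so that the \emph{unsieved} columns of the matrix are pairwise separated by at least $y$, whence any row containing at least $k+1$ primes automatically exhibits $k$ consecutive prime gaps of length at least $y$; and a row with at least $k+1$ primes is then extracted by the first moment alone, once the number of unsieved columns is taken $\gg_k z/\log z$ so that the average number of primes per row exceeds $k$. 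As written, your argument stalls at the extraction step; closing it requires replacing the second-moment/union-bound scheme by this reduction, and correspondingly redesigning the Rankin covering so that the survivors are $y$-separated while still numbering $\gg_k z/\log z$.
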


Let $k = 2$ and let $(n_\ell)$ be a sequence along which the limsup is achieved, and let $a_\ell = p_{n_\ell + 2}\in P$ for some $\ell\in\N$, so that $P\cap B(a_\ell,x) = \{a_\ell\}$, where $x = c\phi(a_\ell)$ for some constant $c>0$. Then we have
\[
\Sigma_{\alpha,-1}' \gtrsim \sum_{a\in P} a^{-2\delta} \log\Big(1/\Psi^{-1}\big(\alpha^{-1} x^{-\delta} \#(P\cap B(a,x))\big)\Big)
\geq \sum_{\ell\in\N} a_\ell^{-2\delta} \log\Big(1/\Psi^{-1}\big(\alpha^{-1} c^{-\delta}\phi(a_\ell)^{-\delta} \big)\Big).
\]
Let $\psi(r) = r^\delta \phi^{-\delta}(\log(1/r))$ as in \eqref{Ppsimuinfty}, so that $\Psi(r) = \phi^{-\delta}(\log(1/r))$ and $\Psi^{-1}(x) = \exp(-\phi^{-1}(x^{-1/\delta}))$. Note that
\[
\phi^{-1}(x) = \log\big(1/\Psi^{-1}(x^{-\delta})\big)
\asymp e^x \frac{\log^2 x}{x \log\log x}
\]
and thus letting $\alpha = c^{-\delta}\gamma^{-\delta}$,
\begin{align*}
a^{-2\delta} \log\Big(1/\Psi^{-1}\big(\alpha^{-1} c^{-\delta}\phi(a)^{-\delta} \big)\Big)
&= a^{-2\delta} \phi^{-1}(\gamma \phi(a))\\
&\asymp a^{-2\delta} a^\gamma
\end{align*}
and thus $\Sigma_{\alpha,-1}'$ diverges for $\gamma > 2\delta$. Combining with Lemmas \ref{lemmaETS} and \ref{lemmasigmaprime} demonstrates \eqref{Ppsimuinfty}.

On the other hand, we use the lower bound of Hoheisel's theorem \eqref{hoheisel2} to get an upper bound on the exact packing dimension. Let $\theta = 21/40$. Fix $a\in P$ and $1 \leq x \leq a/3$, and let $\theta = 21/40$ as in \eqref{hoheisel1}. If $x \geq a^\theta$, then we have
\[
x^{-\delta} \#(P\cap B(a,x)) \gtrsim x^{-\delta} \frac{x}{\log(a)} \geq a^{-\theta\delta} \frac{a^\theta}{\log(a)} \gtrsim a^{-\theta\delta},
\]
and if $x \leq a^\theta$, then we have
\[
x^{-\delta} \#(P\cap B(a,x)) \geq x^{-\delta} \geq a^{-\theta\delta}.
\]
Thus for appropriate $c_2 > 0$
\[
\Sigma_{\alpha,-1}' \asymp \sum_{a\in P} a^{-2\delta} \max_{1 \leq x \leq a/3} \log\Big(1/\Psi^{-1}\big(\alpha^{-1} x^{-\delta} \#(P\cap B(a,x))\big)\Big)
\leq \sum_{a\in P} a^{-2\delta} \log\Big(1/\Psi^{-1}\big(c_2 \alpha^{-1} a^{-\theta\delta}\big)\Big).
\]
Letting
\[
\psi(r) = r^\delta \log^{-s}(1/r), \;\;
\Psi(r) = \log^{-s}(1/r), \;\;
\Psi^{-1}(x) = \exp(-x^{-1/s}),
\]
we get
\[
\Sigma_{\alpha,-1}' \lesssim \sum_{a\in P} a^{-2\delta} a^{\theta\delta/s} < \infty \text{ if } \theta\delta/s < 2\delta - 1
\]
Combining with Lemmas \ref{lemmaETS} and \ref{lemmasigmaprime} demonstrates \eqref{Ppsimu0}.
\end{proof}

\begin{proof}[Proof of Theorem \ref{theoremPD2}]
In what follows we assume the cases $k=1,2$ of Conjecture \ref{conjecturecramergranville}. From the case $k=1$ of Conjecture \ref{conjecturecramergranville}, in particular from $R_1 < \infty$, it follows that the gaps between primes have size $d_n = O(\log^2(p_n))$, and thus for all $a\in P$ and $1\leq x\leq a/3$, we have
\begin{equation}
\label{xdeltacomparison1}
x^{-\delta} \#(P\cap B(a,x)) \gtrsim x^{-\delta}\left( \frac{x}{\log^2(a)} + 1\right) \geq x^{-\delta}\left(\frac{x}{\log^2(a)}\right)^\delta = \frac{1}{\log^{2\delta}(a)}\cdot
\end{equation}
On the other hand, from the case $k=2$, in particular $R_2 > 0$, it follows that for an appropriate constant $c > 0$ there exists an infinite set $I \subset P$ (i.e. the set $\{p_{n+2} : \min(d_{n+1},d_{n+2}) \geq c\log^2(p_{n+2})\}$ for an appropriate constant $c > 0$) such that for all $a\in I$ and $1\leq x = x_a = c\log^2(a) \leq a/3$ such that $P\cap B(a,x) = \{a\}$ and thus
\begin{equation}
\label{xdeltacomparison2}
x^{-\delta} \#(P\cap B(a,x)) = x^{-\delta} \asymp \frac{1}{\log^{2\delta}(a)}\cdot
\end{equation}
Now let $\psi(r) = r^\delta \log^{-2\delta}\log(1/r)$ be as in \eqref{psidef}, so that $\Psi(r) = \log^{-2\delta}(1/r)$ and $\Psi^{-1}(x) = \exp\big(-\exp(x^{-1/2\delta})\big)$. In particular, $\Psi^{-1}$ is increasing. It follows that for appropriate $C_1 \geq 1 \geq C_2 > 0$,
\begin{align*}
\big[a\in I\big]\log\left(1/\Psi^{-1}\Big(\alpha^{-1} \frac{C_1}{\log^{2\delta}(a)}\Big)\right)
&\underset{\eqref{xdeltacomparison2}}{\leq} \max_{1\leq x\leq a/3} \log\left(1/\Psi^{-1}\Big(\alpha^{-1} x^{-\delta} \#(B(a,x)\cap P)\Big)\right)\\
&\underset{\eqref{xdeltacomparison1}}{\leq} \log\left(1/\Psi^{-1}\Big(\alpha^{-1} \frac{C_2}{\log^{2\delta}(a)}\Big)\right)
\end{align*}
and plugging into \eqref{otherpart} yields
\[
\sum_{a\in I} \mu(a) \log\left(1/\Psi^{-1}\Big(\alpha^{-1} \frac{C_1}{\log^{2\delta}(a)}\Big)\right) 
\leq \Sigma_{\alpha,-1}' \leq
\sum_{a\in P} \mu(a) \log\left(1/\Psi^{-1}\Big(\alpha^{-1} \frac{C_2}{\log^{2\delta}(a)}\Big)\right).
\]
We get
\[
\sum_{a\in I} a^{-2\delta} a^{(\alpha C_1^{-1})^{1/2\delta}}
\lesssim \Sigma_{\alpha,-1}' \lesssim
\sum_{a\in P} a^{-2\delta} a^{(\alpha C_2^{-1})^{1/2\delta}}.
\]
By choosing $\alpha > 0$ so that $(\alpha C_2^{-1})^{1/2\delta} < 2\delta - 1$, we get $\Sigma_{\alpha,-1}' < \infty$, and by choosing $\alpha > 0$ so that $(\alpha C_1^{-1})^{1/2\delta} > 2\delta$, we get $\Sigma_{\alpha,-1}' = \infty$. It follows then from Lemmas \ref{lemmaETS} and \ref{lemmasigmaprime} that $\PP^\psi(\mu)$ is positive and finite.
\end{proof}


\bibliographystyle{amsplain}

\bibliography{bibliography}

\end{document}